\newcommand{\CC}{\mathbb{C}}
\newcommand{\xx}{\mathbf{x}}
\newcommand{\JF}{F_{\xx}}
\newcommand{\HF}{F_{\xx\xx}}
\newcommand{\yy}{\mathbf{y}}
\newcommand{\zz}{\mathbf{z}}
\newcommand{\ee}{\mathbf{e}}
\newcommand{\bb}{\mathbf{b}}
\newcommand{\inter}{\mathrm{int}}
\newcommand{\B}{F_{\xx}^{\cs}(\hat{\xx})}
\newcommand{\JG}{G_{\xx,\boldsymbol{\lambda}_1,\bb_0}}
\newcommand{\JH}{H_{\xx,\boldsymbol{\lambda},\bb}}
\newcommand{\C}{G_{{\xx},{\boldsymbol{\lambda}}_1,\bb_0}^{\cs}(\hat{\xx},\hat{\boldsymbol{\lambda}}_1,\mathbf{0})}
\newcommand{\cs}{\mathbf{c}}
\newcommand{\rs}{\mathbf{k}}
\newcommand{\perF}{\widetilde{F}(\xx,\bb)}
\newcommand{\hh}{\mathbf{h}}
\newcommand{\vv}{\mathbf{v}}
\newcommand{\dete}{\mathrm{det}}
\newcommand{\Span}{\mathrm{Span}}
\newcommand{\rank}{\mathrm{rank}}
\newcommand{\corank}{\mathrm{corank}}
\newcommand{\mux}{\mu}
\newcommand{\lam}{\boldsymbol{\lambda}}
\newcommand{\citep}{\cite}
\newtheorem{example}{EXAMPLE}[section]
\newtheorem{theorem}{Theorem}[section]
\newtheorem{lemma}[theorem]{Lemma}
\newtheorem{algorithm}[theorem]{Algorithm}
\newtheorem{define}[theorem]{Definition}
\newenvironment{proof}{{\bf{Proof. }}}{\hfill $\square$\\}
\title{Verified Error Bounds for Isolated Singular Solutions of Polynomial Systems}
\author{
        Nan Li and Lihong Zhi\\
                KLMM, Academy of Mathematics and Systems Science\\
                CAS, Beijing, 100190, China\\
                linan08@amss.ac.cn, lzhi@mmrc.iss.ac.cn
}
\date{}
\begin{document}
\maketitle \noindent In this paper, we generalize the algorithm described by
Rump and Graillat, as well as our previous work on certifying
breadth-one singular solutions of polynomial systems, to compute
verified and narrow error bounds such that a slightly perturbed
system is guaranteed to possess an isolated singular solution within
the computed bounds. Our new verification method is based on deflation
techniques using smoothing parameters.  We demonstrate the
performance of the algorithm for systems with singular solutions of
multiplicity up to hundreds.

\section{Introduction}
\label{intro}

It is a challenge problem to solve polynomial systems with singular
solutions. In~\citep{Rall66}, Rall studied some convergence
properties of Newton's method for singular solutions, and many
modifications of Newton's method to restore the quadratic
convergence for singular solutions have been proposed
in~\citep{Chen97,DeckerKelley:1980I,DeckerKelley:1980II,DeckerKelley:1982,
Griewank:1980, Griewank85, GriewankOsborne:1981,Ojika:1987,
OWM:1983, Reddien:1978,Reddien:1980,ShenYpma05,YAMAMOTONORIO:1984}.
Recently, some symbolic-numeric methods have also been proposed for
refining approximate isolated singular solutions to high
accuracy~\citep{Corless:1997,DLZ:2009,DZ:2005,GLSY:2005, GLSY:2007,
Lecerf:2002,LVZ06, LVZ:2008, MM:2011,WuZhi:2008, WuZhi:2009}.
In~\citep{LiZhi:2009,LZ:2011}, we described an algorithm based on
the regularized Newton iterations and the computation of
differential conditions satisfied at given approximate singular
solutions to compute isolated singular solutions accurately to the
full machine precision  when its Jacobian matrix has corank one (the
breadth-one case).

%
%

Since arbitrary small perturbations of coefficients may transform an
isolated singular solution into a cluster of simple roots  or even
make it disappear, it is more difficult to certify that a polynomial
system or a nonlinear system has a multiple root, if not the entire
computation is performed without any rounding error.

In~\citep{RuGr09},  by introducing a smoothing parameter, Rump and
Graillat described a verification method for computing guaranteed
(real or complex) error bounds such that
 a slightly perturbed system is proved to have a double root within the computed bounds.
In \citep{LZ:2012}, by adding a perturbed univariate polynomial in
one selected variable with some smoothing parameters to one selected
equation of the original system, we generalized the algorithm in
\citep{RuGr09} to compute guaranteed error bounds, such that a
slightly perturbed system is proved to possess an isolated singular
solution whose Jacobian matrix has corank one within the computed
bounds.

In \citep{MM:2011}, Mantzaflaris and Mourrain proposed a one-step
deflation method, and by applying a well-chosen symbolic
perturbation, they verified a multiple root of a nearby system with
a given multiplicity structure, which depends on the accuracy of the
given approximate singular solution. The size of the deflated system
is equal to the multiplicity times the size of the original system,
which might be large (e.g. DZ1 and KSS in Table \ref{table1}).

In \citep{Oishi:1999},  based on deflated square systems proposed by
Yamamoto in \citep{YAMAMOTONORIO:1984}, Kanzawa and Oishi presented
a numerical method for proving the existence of ``imperfect singular
solutions'' of nonlinear equations with guaranteed accuracy.  In
\citep{YAMAMOTONORIO:1984}, if the second-order deflation is
applied,
then smoothing parameters are added not only to the original system
but also to differential systems independently (see
(\ref{Yamaper})). Therefore, one can only prove the existence of an
isolated solution of a slightly perturbed system which satisfies the
first-order differential condition approximately.

In \citep{DK2003MC,KD2003SIAM,KDN2000}, Kearfott  et al. presented
completely different and extremely interesting  methods based on verifying
a nonzero topological degree to certify the existence of singular
zeros of nonlinear systems.

%

\paragraph{\bf{Main contribution}}
Suppose a polynomial system $F$ and an approximate singular solution are given.
 Stimulated by  our previous work on
certifying breadth-one singular solutions  \citep{LZ:2012}, we show
firstly that the number of deflations used  by Yamamoto to obtain a
regular  system is  bounded by the depth  of the singular solution.
Then we show how to move the independent perturbations in the
first-order differential system (\ref{Yamaper}) appeared in
\cite{YAMAMOTONORIO:1984}  back to the original system. We  prove
that  the modified deflations will  terminate after a finite number
of steps which is bounded by the depth as well, and return a regular
and square augmented system, which can be used to prove the
existence of an isolated singular solution of a slightly perturbed
system exactly, see Theorem \ref{maintheorem} and \ref{breadth}.
 Finally, we present an algorithm for computing verified (real or
complex) error bounds,
  such that a slightly perturbed system is
guaranteed to possess an isolated singular solution within the
computed bounds. The algorithm has been implemented in Maple and
Matlab,  and  narrow error bounds of the order of the relative
rounding error are computed efficiently
for examples given in literature. 

\paragraph{\bf{Structure of the paper}}
Section \ref{pre} is devoted to recall some notations and well-known
facts. In Section \ref{dfl}, we present a new deflation method by
adding smoothing parameters properly 
 to the original system, which
will return a regular and square augmented system within a finite
number of steps bounded by the depth. In Section \ref{mrver}, we
propose an algorithm for computing verified (real or complex) error
bounds, such that a slightly perturbed system is guaranteed to
possess an isolated singular solution within the computed bounds.
Some numerical results are given to demonstrate the performance of
our algorithm in Section \ref{exp}.

\section{Preliminaries}
\label{pre}

Let 
$F=\{f_1,\ldots,f_n\}$ be a polynomial system in
$\mathbb{C}[\xx]=\mathbb{C}[x_1, \ldots, x_n]$ and $I\in \CC[\xx]$
be the ideal generated by polynomials in $F$.

\begin{define}
An isolated solution of $F(\xx)=\mathbf{0}$ is a point
$\hat{\xx}\in\mathbb{C}^n$ which satisfies:
\[\mbox{for a small enough } \varepsilon>0: \{\yy\in\mathbb{C}^n:\|\yy-\hat{\xx}\|<\varepsilon\}\cap F^{-1}(\mathbf{0})=\{\hat{\xx}\}.\]
\end{define}

\begin{define}
We call $\hat{\xx}$ a singular solution of $F(\xx)=\mathbf{0}$ if
and only if \begin{align} \rank(\JF(\hat{\xx}))<n,\end{align} where
$\JF(\xx)$ is the Jacobian matrix of $F(\xx)$ with respect to $\xx$.
\end{define}

\begin{define}

Let $Q_{\hat{\xx}}$ be the isolated primary component of the ideal
$I=(f_1, \ldots, f_n)$ whose associate prime is
$m_{\hat{\xx}}=(x_1-\hat{x}_1,\ldots,x_n-\hat{x}_n)$,
then the multiplicity $\mu$  of $\hat{\xx}$ is defined as
$\mux=\dim(\CC[\xx]/Q_{\hat{\xx}})$, and the index $\rho$ of $\hat{\xx}$
is defined as the minimal nonnegative integer $\rho$ such that
$m_{\hat{\xx}}^\rho \subseteq Q_{\hat{\xx}}$ \cite{Waerden:1970}.

\end{define}


 Let
$\mathbf{d}^{\alpha}_{\hat{\xx}}: \CC[\xx] \rightarrow \mathbb{C}$
denote the differential functional defined by
\begin{equation}
\mathbf{d}^{\alpha}_{\hat{\xx}}(g)=\frac{1} {\alpha_1!\cdots
\alpha_n!}\cdot\frac{\partial^{|\alpha|} g}{\partial
x_1^{\alpha_1}\cdots
\partial x_n^{\alpha_n}}(\hat{\xx}),\quad\forall g(\xx)\in \CC[\xx],
\end{equation}
for a point $\hat{\xx}\in \mathbb{C}^n$ and an array $\alpha\in
\mathbb{N}^n$. The normalized differentials have a useful property:
when $\hat{\xx}=\mathbf{0}$, we have
$\mathbf{d}^{\alpha}_{\mathbf{0}}(\xx^{\beta})=1$ if $\alpha=\beta$
or $0$ otherwise.

\begin{define}\label{localdual}
The local dual space of $I$ at $\hat{\xx}$ is the subspace of
elements of
$\mathfrak{D}_{\hat{\xx}}=\Span_\mathbb{C}\{\mathbf{d}^{\alpha}_{\hat{\xx}},\alpha\in\mathbb{N}^n\}$
that vanish on all the elements of $I$
\begin{equation}
\mathcal{D}_{\hat{\xx}}:=\{\Lambda\in
\mathfrak{D}_{\hat{\xx}}\,\,|\,\, \Lambda(f)=0, ~\forall f\in I\}.
\end{equation}
\end{define}
It is clear  that   $\dim(\mathcal{D}_{\hat{\xx}})=\mux$ and the
maximal degree of an element $\Lambda\in\mathcal{D}_{\hat{\xx}}$ is equal
to the index $\rho-1$, which is also known as   the depth of
$\mathcal{D}_{\hat{\xx}}$.


A singular solution $\hat{\xx}$ of a square system
$F(\xx)=\mathbf{0}$ satisfies equations
\begin{align}\label{basicdfl}\left\{
\begin{array}{r}
    F(\xx)=\mathbf{0}, \\
    \dete(\JF(\xx))=0.
  \end{array}
\right.
\end{align}
The above augmented system forms the basic idea for the
deflation method \citep{Ojika:1987,Ojika:1988,OWM:1983}.
But the determinant is usually of high degree, so it is
numerically unstable to evaluate the determinant of the Jacobian
matrix.

In \citep{LVZ06}, Leykin et al. modified  (\ref{basicdfl}) by adding new
variables and equations.
Let $r=\rank(\JF(\hat{\xx}))$, then there exists a unique vector
$\hat{\boldsymbol{\lambda}}=$
$(\hat{\lambda}_1,\hat{\lambda}_2\ldots,\hat{\lambda}_{r+1})^T$ such
that $(\hat{\xx},\hat{\boldsymbol{\lambda}})$ is an isolated
solution of
\begin{align}\label{modifieddfl}\left\{
\begin{array}{r}
    F(\xx)=\mathbf{0}, \\
    \JF(\xx)B\boldsymbol{\lambda}=\mathbf{0}, \\
    \hh^T\boldsymbol{\lambda}=1,
  \end{array}
\right.
\end{align}
where $B\in\mathbb{C}^{n\times(r+1)}$ is a random matrix,
$\hh\in\mathbb{C}^{r+1}$ is a random vector and
$\boldsymbol{\lambda}$ is a vector consisting of $r+1$ extra
variables $\lambda_1,\lambda_2\ldots,\lambda_{r+1}$. If
$(\hat{\xx},\hat{\boldsymbol{\lambda}})$ is still a singular
solution of (\ref{modifieddfl}), the deflation is repeated.
Furthermore, they proved that the number of deflations needed to
derive a regular root of an augmented system is strictly less than
the multiplicity of $\hat{\xx}$.  Dayton and Zeng showed that the
depth of $\mathcal{D}_{\hat{\xx}}$ is a tighter bound for the number
of deflations \citep{DZ:2005}.

Let  $\mathbb{IR}$  be the set of real intervals, and
$\mathbb{IR}^n$ and $\mathbb{IR}^{n \times n}$ be  the set of real
interval vectors and real interval matrices, respectively. Standard
verification methods for nonlinear systems are based on the
following theorem \citep{Krawczyk:1969, Moore:1977, Rump:1983}.


\begin{theorem}\label{verification}
Let $F(\xx):\mathbb{R}^n\rightarrow\mathbb{R}^n$ be a polynomial
system, and $\tilde{\xx}\in\mathbb{R}^n$. Given $\mathbf{X}\in\mathbb{IR}^n$
with $\mathbf{0}\in \mathbf{X}$ and $M\in\mathbb{IR}^{n\times n}$ satisfies
$\nabla f_i(\tilde{\xx}+\mathbf{X})\subseteq M_{i,:}$, for $i=1,\ldots,n$.
Denote by $I$ the $n\times n$ identity matrix and assume
\begin{equation}
-\JF^{-1}(\tilde{\xx})F(\tilde{\xx})+(I-\JF^{-1}(\tilde{\xx})M)\mathbf{X}\subseteq
\inter(\mathbf{X}).
\end{equation}
Then there is a unique $\hat{\xx}\in \mathbf{X}$ with $F(\hat{\xx})=0$.
Moreover, every matrix $\tilde{M}\in M$ is nonsingular. In
particular, the Jacobian matrix $\JF(\hat{\xx})$ is nonsingular.
\end{theorem}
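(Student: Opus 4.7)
The plan is to build a Krawczyk-style fixed point operator and invoke Brouwer's theorem on the box $\mathbf{X}$. Writing $R:=\JF^{-1}(\tilde{\xx})$ (well defined, as $\JF^{-1}(\tilde{\xx})$ already appears in the hypothesis), consider the continuous map
\[
g(\yy) \;=\; \yy - R\,F(\tilde{\xx}+\yy),\qquad \yy\in\mathbf{X}.
\]
Applying the mean value theorem componentwise along the segment from $\tilde{\xx}$ to $\tilde{\xx}+\yy$, I would produce, for each $\yy\in\mathbf{X}$, a real matrix $J\in\mathbb{R}^{n\times n}$ whose $i$-th row equals $\nabla f_i(\xi_i)$ for some $\xi_i$ on that segment, so that $J_{i,:}\in M_{i,:}$ for every $i$ and $F(\tilde{\xx}+\yy)=F(\tilde{\xx})+J\yy$. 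Substituting yields
\[
g(\yy) \;=\; -R\,F(\tilde{\xx})+(I-RJ)\yy \;\in\; -R\,F(\tilde{\xx})+(I-RM)\mathbf{X},
\]
which by the hypothesis sits in $\inter(\mathbf{X})\subseteq\mathbf{X}$.

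Hence $g$ is a continuous self-map of the compact convex box $\mathbf{X}$, and Brouwer's fixed point theorem gives some $\hat{\yy}\in\mathbf{X}$ with $R\,F(\tilde{\xx}+\hat{\yy})=\mathbf{0}$; invertibility of $R$ then forces $F(\hat{\xx})=\mathbf{0}$ with $\hat{\xx}:=\tilde{\xx}+\hat{\yy}$, establishing existence. For nonsingularity, fix any $\tilde{M}\in M$ and consider the affine map $T(\zz):=-R\,F(\tilde{\xx})+(I-R\tilde{M})\zz$; by hypothesis $T$ maps $\mathbf{X}$ into $\inter(\mathbf{X})$, so Brouwer produces a fixed point $\zz^{\ast}\in\mathbf{X}$. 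If $\tilde{M}$ were singular with $\tilde{M}\vv=\mathbf{0}$ for some $\vv\neq\mathbf{0}$, then $\zz^{\ast}+s\vv$ would again be a fixed point of $T$ for every real $s$; choosing $s^{\ast}$ maximal with $\zz^{\ast}+s^{\ast}\vv\in\mathbf{X}$ forces this fixed point onto $\partial\mathbf{X}$, contradicting $T(\mathbf{X})\subseteq\inter(\mathbf{X})$. Hence every $\tilde{M}\in M$, and in particular $\JF(\hat{\xx})\in M$, is nonsingular. Uniqueness follows from one further mean value step: any two zeros $\hat{\xx}_1,\hat{\xx}_2\in\tilde{\xx}+\mathbf{X}$ satisfy $\mathbf{0}=F(\hat{\xx}_1)-F(\hat{\xx}_2)=\tilde{J}(\hat{\xx}_1-\hat{\xx}_2)$ for some $\tilde{J}\in M$, and invertibility of $\tilde{J}$ forces $\hat{\xx}_1=\hat{\xx}_2$.

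The routine parts are the mean value reformulation and the two Brouwer applications. The main obstacle I anticipate is the nonsingularity step, where the strictness of $\subseteq\inter(\mathbf{X})$ (as opposed to $\subseteq\mathbf{X}$) is essential to produce the boundary contradiction along the line $\{\zz^{\ast}+s\vv\}$, and where the interval semantics of $(I-RM)\mathbf{X}$ must be invoked pointwise on each concrete pair $(\tilde{M},\yy)$ rather than as an abstract linear-algebraic contraction.
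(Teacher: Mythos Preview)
Your argument is the standard Krawczyk--Rump proof and is correct in all three parts (existence via Brouwer on the Newton-like map, nonsingularity of every $\tilde{M}\in M$ via the boundary contradiction along $\zz^{\ast}+s\vv$, and uniqueness via one more mean value step). Note, however, that the paper does not supply its own proof of this theorem: it is quoted from the literature \cite{Krawczyk:1969, Moore:1977, Rump:1983} as a known verification result and used as a black box thereafter, so there is no paper proof to compare against. Your write-up is exactly the classical argument those references contain; the only cosmetic point is that the conclusion should read $\hat{\xx}\in\tilde{\xx}+\mathbf{X}$ rather than $\hat{\xx}\in\mathbf{X}$, which you already handle correctly in your uniqueness step.
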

Naturally the non-singularity of the Jacobian matrix
$\JF(\hat{\xx})$ restricts the application of Theorem
\ref{verification} to regular solutions of square systems. Notice
that Theorem \ref{verification} is valid mutatis mutandis over
complex numbers as well. Next we will use this theorem to derive a
verification method to prove the existence of an isolated singular
solution of a slightly perturbed system.
\section{A Square and Regular  Augmented System  }\label{dfl}

Let a polynomial system $F=\{f_1,\ldots,f_n\}\in \CC[\xx]$ be given
and $\hat{\xx}=(\hat{x}_1,\ldots,\hat{x}_n)$ is an isolated singular
solution satisfying  $F(\hat \xx)=\mathbf{0}$.

The augmented systems
(\ref{basicdfl}) and (\ref{modifieddfl}) 
 have been used
to restore the quadratic convergence of Newton's method. But notice that these extended systems are always over-determined, which are not applicable by Theorem
\ref{verification}.
Hence, a natural thought of modifications is, whether we could add several smoothing parameters to derive a square system with a nonsingular Jacobian matrix.

In \cite{YAMAMOTONORIO:1984},  by introducing smoothing parameters,
Yamamoto derived square deflated systems. These systems were used
successfully  by Kanzawa and Oishi in \cite{Oishi:1999} to certify
the existence of \emph{``imperfect singular solutions''} of
polynomial systems. However, for isolated singular solutions with
high singularities, the smoothing parameters are added not only to
the original system but also to differential
systems independently (see  (\ref{Yamaper})). 
Therefore, according to (\ref{imper}), one can only prove the existence of an isolated solution
of a slightly perturbed system which satisfies the first-order
differential condition approximately.

In the following, we rewrite the deflation techniques in
\cite{YAMAMOTONORIO:1984} in our setting,  and prove that the number
of deflations needed to obtain a regular system  is bounded by the
depth of $\mathcal{D}_{\hat{\xx}}$, see Theorem \ref{terminate}.
Then we show how to lift the independent perturbations in the
first-order differential system appeared in (\ref{Yamaper})  back to
the original system. We prove that the modified deflations will 
terminate after a finite number of steps  bounded by the depth of
$\mathcal{D}_{\hat{\xx}}$ as well, and return a regular and square
augmented system, which can be used to verify the existence of an
isolated singular solution of a slightly perturbed system exactly,
see Theorem \ref{maintheorem} and {\ref{breadth}.
%
%

\subsection{The first-order deflation}

Let $\hat{\xx}\in\mathbb{C}^n$ be an isolated singular solution of
$F(\xx)=\mathbf{0}$, and
\begin{align}
\rank(\JF(\hat{\xx}))=n-d,  ~(1<d\leq n).
\end{align}
Let $\cs=\{c_1,c_2,\ldots,c_d\}~(1\leq c_1\leq c_2\leq \ldots\leq
c_d\leq n)$ and $\B$ be obtained from $\JF(\hat{\xx})$ by deleting
its $c_1,c_2,\ldots,c_d$-th columns which  satisfies
\begin{equation}\label{col}
 \rank(\B)=n-d. 
 \end{equation}
There exists a positive-integer set $\rs=\{k_1,k_2,\ldots,k_d\}$
such that
 \begin{equation}\label{row}
 \rank(\B,I_{\rs})=n, 
 \end{equation}
 where 
\begin{equation}
I_{\rs}=(\ee_{k_1},\ee_{k_2},\ldots,\ee_{k_d}), \end{equation} and
$\ee_{k_i}$ is the $k_i$-th unit vector of dimension $n$.

Similar to the augmented system (2.34)  in
\cite{YAMAMOTONORIO:1984}, we
 introduce $d$ smoothing parameters
$\bb_0=(b_1,b_2,\ldots,b_{d})^T$ and consider the following square
 system
\begin{align}\label{Yamapara}G(\xx,\boldsymbol{\lambda}_1,\bb_0)=\left\{
\begin{array}{r}
    F(\xx)-\sum_{i=1}^{d}b_i\ee_{k_i}=\mathbf{0}, \\
    \JF(\xx)\mathbf{v}_1=\mathbf{0},
  \end{array}
\right.
\end{align}
where  $\vv_1$ is a vector consisting of $n-d$ extra variables
$\boldsymbol{\lambda}_1=(\lambda_1,\lambda_2,\ldots,\lambda_{n-d})^T$
and its entries at the positions $c_1,c_2,\ldots,c_d$ are fixed to 
be $1$ rather than random nonzero numbers used in
\cite{YAMAMOTONORIO:1984}.
According to (\ref{col}), the rank of $\B$ is $n-d$, the
linear system  $\JF(\hat{\xx})\vv_1=\mathbf{0}$ has a unique
solution, denoted by $\hat{\boldsymbol{\lambda}}_1$. Therefore,
$(\hat{\xx},\hat{\boldsymbol{\lambda}}_1,\mathbf{0})$ is an isolated
solution of (\ref{Yamapara}). If
$(\hat{\xx},\hat{\boldsymbol{\lambda}}_1,\mathbf{0})$ is
still a singular solution,
 as proposed  in \cite{YAMAMOTONORIO:1984}, the deflation process mentioned above is repeated to the first-order deflated system $G$
 and the solution $(\hat{\xx},\hat{\boldsymbol{\lambda}}_1,\mathbf{0})$.

 Note that Yamamoto did not prove explicitly the termination of  the above-mentioned deflation
 process.   Motivated by the results in \cite{LVZ06,DZ:2005},  we show
 below that  the number of deflations needed to derive a regular and square augmented system  is also bounded by
the depth of $\mathcal{D}_{\hat{\xx}}$.


Let $\hh=(\underbrace{0,\ldots,0}_{n-d},1)^T$,
$\boldsymbol{\lambda}=(\lambda_1,\ldots,\lambda_{n-d},\lambda_{n-d+1})^T$
and
\[B=(\hat{\ee}_{1},\ldots,\underset{c_1}{\hat{\ee}_{n-d+1}},\ldots,\underset{c_{d}}{\hat{\ee}_{n-d+1}},\ldots,\hat{\ee}_{n-d})^T\in\mathbb{C}^{n\times(n-d+1)},\]
where $\hat{\ee}_{i}$ is the $i$-th unit vector of dimension
$n-d+1$. 
Then the augmented system (\ref{modifieddfl}) used in \cite{LVZ06} is
equivalent to
\begin{equation}\label{equivalent}
\widetilde{G}(\xx,\boldsymbol{\lambda}_1)= \left\{\begin{array}{r}
    F(\xx)=\mathbf{0}, \\
    \JF(\xx)\vv_1=\mathbf{0},
  \end{array}
\right.
\end{equation}
which has an isolated solution at $(\hat{\xx},\hat{\boldsymbol{\lambda}}_1)$,
and the Jacobian matrix of $\widetilde{G}(\xx,\boldsymbol{\lambda}_1)$
at $(\hat \xx, \hat {\boldsymbol{\lambda}}_1)$ is
\begin{equation}\label{equivalentfirstorder}
\widetilde{G}_{\xx, {\boldsymbol{\lambda}}_1} (\hat \xx,
\hat{\boldsymbol{\lambda}}_1)=\left(
    \begin{array}{cc}
      \JF(\hat{\xx}) & \mathcal{O}_{n,n-d} \\
      \HF(\hat{\xx})\hat{\vv}_1 & \B\\
    \end{array}
  \right),
  \end{equation}
where $\mathcal{O}_{i,j}$ denotes  the $i\times j$ zero matrix and
$\HF(\xx)$ is the Hessian matrix of $F(\xx)$.
%
On the other hand, the Jacobian matrix of
$G(\xx,\boldsymbol{\lambda}_1,\bb_0)$ computes to
\begin{equation}\label{J1}
\JG(\hat{\xx},\hat{\boldsymbol{\lambda}}_1,\mathbf{0})=\left(
    \begin{array}{ccc}
      \JF(\hat{\xx}) & \mathcal{O}_{n,n-d} &  -I_{\rs} \\
      \HF(\hat{\xx})\hat{\vv}_1 & \B & \mathcal{O}_{n,d}\\
    \end{array}
  \right).
\end{equation}

\begin{lemma} The null spaces of the Jacobian matrices  (\ref{equivalentfirstorder}) and
(\ref{J1}) satisfy
\[\mathrm{Null}\left(\JG(\hat{\xx},\hat{\boldsymbol{\lambda}}_1,\mathbf{0})\right)
=\left\{\left(\begin{array}{c}
                \yy \\
                \mathbf{0}
              \end{array}
\right)\in\CC^{2n}~|~\yy\in\mathrm{Null}\left(\widetilde{G}_{\xx,\boldsymbol{\lambda}_1}(\hat{\xx},\hat{\boldsymbol{\lambda}}_1)
  \right)\right\}.\]

\end{lemma}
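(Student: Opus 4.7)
My plan is to verify the set equality by a direct block computation, with a single nontrivial ingredient coming from the rank conditions (\ref{col}) and (\ref{row}) on $\B$ and $I_{\rs}$.

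First I would parametrize an arbitrary element of $\mathrm{Null}(\JG(\hat{\xx},\hat{\boldsymbol{\lambda}}_1,\mathbf{0}))$ as a triple $(\yy_{\xx},\yy_{\lam},\zz)$ with $\yy_{\xx}\in\CC^n$, $\yy_{\lam}\in\CC^{n-d}$, $\zz\in\CC^d$, matching the column partition of (\ref{J1}). Reading off the two row blocks gives
\begin{align*}
\JF(\hat{\xx})\,\yy_{\xx} &= I_{\rs}\zz, \\
\HF(\hat{\xx})\hat{\vv}_1\,\yy_{\xx} + \B\,\yy_{\lam} &= \mathbf{0}.
\end{align*}
The reverse inclusion $\supseteq$ is then immediate: a vector $(\yy_{\xx},\yy_{\lam})$ in the null space of $\widetilde{G}_{\xx,\boldsymbol{\lambda}_1}(\hat{\xx},\hat{\boldsymbol{\lambda}}_1)$ satisfies the $\zz=\mathbf{0}$ specialization of both equations, so $(\yy_{\xx},\yy_{\lam},\mathbf{0})$ lies in $\mathrm{Null}(\JG)$.

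For the forward inclusion $\subseteq$, the main step is to force $\zz=\mathbf{0}$. Since $\B$ is obtained from $\JF(\hat{\xx})$ by deleting the $c_1,\ldots,c_d$-th columns, $\mathrm{col}(\B)\subseteq\mathrm{col}(\JF(\hat{\xx}))$; combined with $\rank(\B)=\rank(\JF(\hat{\xx}))=n-d$, this forces equality of column spaces. Hence $I_{\rs}\zz=\JF(\hat{\xx})\yy_{\xx}\in\mathrm{col}(\B)$. Condition (\ref{row}) says the $n$ columns of $(\B,I_{\rs})$ are linearly independent, so $\mathrm{col}(\B)\cap\mathrm{col}(I_{\rs})=\{\mathbf{0}\}$, giving $I_{\rs}\zz=\mathbf{0}$. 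Because the columns of $I_{\rs}$ are distinct unit vectors they are linearly independent, so $\zz=\mathbf{0}$. Substituting back, the two block equations become exactly the equations defining $\mathrm{Null}(\widetilde{G}_{\xx,\boldsymbol{\lambda}_1}(\hat{\xx},\hat{\boldsymbol{\lambda}}_1))$ in (\ref{equivalentfirstorder}), completing the inclusion.

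The only nontrivial piece of the argument is the trivial-intersection step in the previous paragraph; everything else is bookkeeping of block structure. I do not expect a real obstacle, but care is needed to keep the three pieces $\yy_{\xx}$, $\yy_{\lam}$, $\zz$ straight and to invoke (\ref{col}) and (\ref{row}) in the right order so that the cancellation of $\zz$ is clean.
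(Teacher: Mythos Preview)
Your argument is correct and follows essentially the same route as the paper's proof: both handle the easy inclusion trivially and then use the rank conditions (\ref{col}) and (\ref{row}) to force $\zz=\mathbf{0}$ in the other direction. The paper's version is terser---it writes only $\JF(\hat{\xx})\yy_1=I_{\rs}\zz$ and then invokes $\rank(\B,-I_{\rs})=n$ to conclude $\zz=\mathbf{0}$---while you make explicit the intermediate step that $\mathrm{col}(\JF(\hat{\xx}))=\mathrm{col}(\B)$ and hence $\mathrm{col}(\B)\cap\mathrm{col}(I_{\rs})=\{\mathbf{0}\}$, which is exactly what justifies the paper's ``it is clear.''
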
\label{coranks}
\begin{proof}
If  $\yy
\in\mathrm{Null}\left(\widetilde{G}_{\xx,\boldsymbol{\lambda}_1}(\hat{\xx},\hat{\boldsymbol{\lambda}}_1)\right)$
then $\left(\begin{array}{c}
                \yy \\
                \mathbf{0}
              \end{array}
\right) \in
\mathrm{Null}\left(\JG(\hat{\xx},\hat{\boldsymbol{\lambda}}_1,\mathbf{0})\right)$.
Suppose    $\left(\begin{array}{c}
                \yy \\
                \zz
              \end{array}
\right)$ is a null vector of
$\JG(\hat{\xx},\hat{\boldsymbol{\lambda}}_1,\mathbf{0})$. We divide
$\yy$ into $\left(\begin{array}{c}
                \yy_1 \\
                \yy_2
              \end{array}
\right)$ corresponding to the blocks $\JF(\hat \xx)$ and
$\mathcal{O}_{n,n-d}$.  Therefore, we have
\[\JF(\hat \xx) \yy_1 -I_{\rs} \zz=\mathbf{0}.\]
By (\ref{row}), we have
 \[\rank(\B,-I_{\rs})=n.\]
  It is clear that $\zz$ must be a zero
 vector.
\end{proof}

%
If $(\hat{\xx},\hat{\boldsymbol{\lambda}}_1)$ is still an isolated
singular solution of the deflated system (\ref{equivalent}), as
proposed in \citep{LVZ06}, the deflation process is repeated for
$\widetilde{G}(\xx,\boldsymbol{\lambda}_1)$ and
$(\hat{\xx},\hat{\boldsymbol{\lambda}}_1)$. Then as shown in
 \citep{DZ:2005},
if the $s$-th deflated system is singular,
there exists at least one differential functional of the order $s+1$ in $\mathcal{D}_{\hat{\xx}}$.
However, the order of
differential functionals in $\mathcal{D}_{\hat{\xx}}$ is bounded by
its depth which is equal to $\rho-1$. Therefore, after at most
$\rho-1$  steps of deflations, one will obtain a regular deflated
system, i.e.,
 the corank of the Jacobian matrix of the deflated system  will be zero.

%
As a consequence, based on Lemma \ref{coranks}, we claim the finite
termination of Yamamoto's  deflation method.

\begin{theorem}\label{terminate}
The number of Yamamoto's   deflations 
 needed to derive a regular solution of a square augmented system
is bounded by the depth of $\mathcal{D}_{\hat{\xx}}$.
\end{theorem}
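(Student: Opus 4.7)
The plan is to deduce Yamamoto's termination bound from the analogous bound already recalled in the excerpt for the Leykin-Verschelde-Zhao (LVZ) deflation, using Lemma \ref{coranks} to translate between the two. The paper has just observed, via Dayton-Zeng \cite{DZ:2005}, that iterating \eqref{equivalent} produces a regular deflated system after at most $\rho-1$ steps, where $\rho-1$ is the depth of $\mathcal{D}_{\hat{\xx}}$. Hence it suffices to prove that, at every iteration, the corank of Yamamoto's deflated Jacobian at the natural lifted singular solution agrees with that of the LVZ deflated Jacobian at the corresponding lifted solution.

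I would proceed by induction on the deflation level $s$. Write $G^{(s)}$ for the system produced by $s$ applications of Yamamoto's construction \eqref{Yamapara} to $F$, and $\widetilde{G}^{(s)}$ for the system produced by $s$ applications of \eqref{equivalent}; let $(\hat{\xx},\hat{\lam}^{(s)},\mathbf{0})$ and $(\hat{\xx},\widetilde{\hat{\lam}}^{(s)})$ be the respective lifted singular solutions. The base case $s=1$ is exactly Lemma \ref{coranks}. For the inductive step, at level $s+1$ the $d_{s+1}$ new smoothing parameters contribute only a single block of the form $-I_{\rs_{s+1}}$ on the right of Yamamoto's Jacobian, just as in \eqref{J1}; repeating the null-space argument used to prove Lemma \ref{coranks}, together with the analog of the rank condition \eqref{row} at level $s+1$, forces the coordinates associated with those new parameters to vanish in every null vector. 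Hence the null spaces of Yamamoto's and LVZ's Jacobians are isomorphic at every stage, and the coranks agree. Combining this with the Dayton-Zeng bound for the LVZ iteration finishes the proof.

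The main obstacle is the bookkeeping in the inductive step. At each level the two methods add different numbers of extra variables: Yamamoto adds both $\lam$-variables and smoothing parameters $\bb$, while LVZ adds only $\lam$-variables. One must therefore consistently choose compatible index sets $\cs$ and $\rs$ at every level so that rank conditions of the form \eqref{col} and \eqref{row} remain available, and identify the corresponding lifted points so that Lemma \ref{coranks} can be invoked verbatim. Once this bookkeeping is in place, each inductive step is a direct repetition of the argument used for Lemma \ref{coranks}, and the depth bound transfers from LVZ to Yamamoto without any new substantive computation.
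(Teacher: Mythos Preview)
Your proposal is correct and follows essentially the same approach as the paper: invoke Lemma \ref{coranks} to equate the coranks of Yamamoto's and LVZ's Jacobians at each deflation step, then carry over the Dayton--Zeng depth bound from the LVZ iteration. The paper's own proof is in fact terser on the inductive step than yours, merely asserting that the coranks remain equal at every stage without spelling out the bookkeeping you flag.
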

\begin{proof}
By Lemma \ref{coranks}, we have
\begin{equation}\label{equalcorank}
\corank\left(\widetilde{G}_{\xx, {\boldsymbol{\lambda}}_1} (\hat
\xx,
\hat{\boldsymbol{\lambda}}_1)\right)=\corank\left(\JG(\hat{\xx},\hat{\boldsymbol{\lambda}}_1,
\mathbf{0})\right).
\end{equation}
Therefore, the smoothing parameters we added in the deflated system
(\ref{Yamapara}) do not change  rank-deficient information of the
Jacobian matrix of the deflated system (\ref{equivalent}). 
 If $\corank\left(\widetilde{G}_{\xx,
{\boldsymbol{\lambda}}_1} (\hat \xx,
\hat{\boldsymbol{\lambda}}_1)\right)=\corank\left(\JG(\hat{\xx},\hat{\boldsymbol{\lambda}}_1,
\mathbf{0})\right)>0,$ then  we repeat the deflation steps to
(\ref{Yamapara}) and (\ref{equivalent}) accordingly. Inductively, we
know that coranks of Jacobian matrices of two different kinds of
deflated systems remain equal at every step. Moreover, we have shown that,
after as most $\rho-1$ steps, the corank of the Jacobian matrix of
the deflated system corresponding to (\ref{equivalent}) will become
zero. Therefore, the deflated system corresponding to
(\ref{Yamapara}) will also become regular after at most $\rho-1$
steps.
\end{proof}


\subsection{The second-order deflation}
Suppose the Jacobian matrix
$\JG(\hat{\xx},\hat{\boldsymbol{\lambda}}_1,\mathbf{0})$ is
singular, i.e.,
\begin{align}\rank(\JG(\hat{\xx},\hat{\boldsymbol{\lambda}}_1,\mathbf{0}))=2n-d',~(d'\geq
1).\end{align}
 Let $\cs'=\{c_1',c_2',\ldots,c_{d'}'\}$  and
$\rs'=\{k_1',k_2',\ldots,k_{d'}'\}$ be two positive-integer sets
such that
\begin{equation}\label{fullranksecond1}
\rank(G_{{\xx},{\boldsymbol{\lambda}_1},\bb_0}^{\cs'}(\hat{\xx},\hat{\boldsymbol{\lambda}}_1,\mathbf{0}))=2n-d',
\end{equation}
\begin{equation}\label{fullranksecond2}
\rank\left(G_{{\xx},{\boldsymbol{\lambda}_1},\bb_0}^{\cs'}(\hat{\xx},\hat{\boldsymbol{\lambda}}_1,\mathbf{0}),
I_{\rs'+n}\right)=2n,
\end{equation}
 where
$G_{{\xx},{\boldsymbol{\lambda}_1},\bb_0}^{\cs'}(\hat{\xx},\hat{\boldsymbol{\lambda}}_1,\mathbf{0})$
is a matrix obtained from
$\JG(\hat{\xx},\hat{\boldsymbol{\lambda}}_1,\mathbf{0})$ by deleting
its $c_1',c_2',\ldots,c_{d'}'$-th columns, and
\begin{align}
I_{\rs'+n}=\left(\begin{array}{c}
                                        \mathcal{O}_{n,d'} \\
                                        I_{\rs'}
                                      \end{array}
\right),  ~ I_{\rs'}=( 
\ee_{k_1'},\ee_{k_2'},\ldots,\ee_{k_{d'}'}). \end{align}

\begin{theorem}\label{2nddfl} Comparing to $\JF(\hat{\xx})$, the corank of $\JG(\hat{\xx},\hat{\boldsymbol{\lambda}}_1,\mathbf{0})$
does not increase, i.e.,  $d' \leq d$. Moreover, we can choose
$\cs'$ and $\rs'$ such that  $\cs' \subseteq \cs$,
$\rs'\subseteq\rs$ and satisfy (\ref{fullranksecond1}) and (\ref{fullranksecond2}) respectively.
\end{theorem}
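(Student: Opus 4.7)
The plan is to exploit the block structure of $\JG(\hat{\xx},\hat{\boldsymbol{\lambda}}_1,\mathbf{0})$ displayed in~(\ref{J1}) and to reduce both claims to the invertibility of the $n\times n$ matrix $(\B,I_{\rs})$ guaranteed by~(\ref{row}).

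To prove $d'\leq d$, I would use Lemma~\ref{coranks} to replace $\JG(\hat{\xx},\hat{\boldsymbol{\lambda}}_1,\mathbf{0})$ by the smaller block matrix $\widetilde{G}_{\xx,\boldsymbol{\lambda}_1}(\hat{\xx},\hat{\boldsymbol{\lambda}}_1)$ in~(\ref{equivalentfirstorder}). Any null vector $(\yy_1,\yy_2)$ of that matrix forces $\yy_1\in\mathrm{Null}(\JF(\hat{\xx}))$, and then $\B\yy_2=-\HF(\hat{\xx})\hat{\vv}_1\yy_1$ determines $\yy_2$ uniquely because $\B$ has full column rank $n-d$ by~(\ref{col}). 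So the projection $(\yy_1,\yy_2)\mapsto\yy_1$ embeds the null space into the $d$-dimensional space $\mathrm{Null}(\JF(\hat{\xx}))$, which yields $d'\leq d$.

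For the choice of $\cs'\subseteq\cs$ satisfying~(\ref{fullranksecond1}), I would use that the full column rank of $\B$ makes the coordinate projection $\mathrm{Null}(\JF(\hat{\xx}))\to\CC^d$, $\yy_1\mapsto(y_{1,c})_{c\in\cs}$, an isomorphism. The image $V$ of $\mathrm{Null}(\JG(\hat{\xx},\hat{\boldsymbol{\lambda}}_1,\mathbf{0}))$ inside $\mathrm{Null}(\JF(\hat{\xx}))$ has dimension $d'$, so it corresponds to a $d'$-dimensional subspace of $\CC^d$. A standard linear-algebra argument produces $d'$ coordinates $\cs'\subseteq\cs$ onto which this subspace projects isomorphically; translated back, any null vector of $\JG(\hat{\xx},\hat{\boldsymbol{\lambda}}_1,\mathbf{0})$ whose $\yy_1$-coordinates at positions $\cs'$ vanish must itself vanish, so deleting the columns $\cs'$ yields~(\ref{fullranksecond1}).

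The main obstacle is producing $\rs'$, which I would approach dually by analyzing the left null space $L$ of $G_{\xx,\boldsymbol{\lambda}_1,\bb_0}^{\cs'}(\hat{\xx},\hat{\boldsymbol{\lambda}}_1,\mathbf{0})$. Reading off the $\bb_0$-, $\boldsymbol{\lambda}_1$- and $\xx$-column groups of this matrix, a left null vector $(\uu,\vv)\in\CC^n\times\CC^n$ must satisfy $u_k=0$ for $k\in\rs$, $\vv^T\B=\mathbf{0}$, and a third equation from the $\xx$-columns involving $\uu^T\JF^{\cs'}(\hat{\xx})$ and the column-deleted Hessian-vector term. The crucial claim is that the map $(\uu,\vv)\mapsto(v_k)_{k\in\rs}$ embeds $L$ into $\CC^d$: assuming $v_k=0$ for all $k\in\rs$ gives $\vv^T(\B,I_{\rs})=\mathbf{0}$, forcing $\vv=\mathbf{0}$ by~(\ref{row}); the third equation then collapses to $\uu^T\JF^{\cs'}(\hat{\xx})=\mathbf{0}$, and since $\cs'\subseteq\cs$ entails $\mathrm{Col}(\JF^{\cs'}(\hat{\xx}))=\mathrm{Col}(\B)$, this gives $\uu^T\B=\mathbf{0}$; combined with $u_k=0$ for $k\in\rs$, a second application of~(\ref{row}) yields $\uu=\mathbf{0}$. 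Since $\dim L=d'\leq d$, one can pick $\rs'\subseteq\rs$ of size $d'$ onto whose second-block coordinates $L$ projects isomorphically, which is precisely~(\ref{fullranksecond2}). The delicate step is this two-fold use of~(\ref{row}), together with the identification $\mathrm{Col}(\JF^{\cs'}(\hat{\xx}))=\mathrm{Col}(\B)$, for which the inclusion $\cs'\subseteq\cs$ is essential.
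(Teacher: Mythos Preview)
Your argument is correct, but it takes a noticeably more elaborate route than the paper's.

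The paper argues purely on the column side. It simply observes that the matrix
\[
\C=\begin{pmatrix}
\B & \mathcal{O}_{n,n-d} & -I_{\rs}\\
\star & \B & \mathcal{O}_{n,d}
\end{pmatrix},
\]
obtained from $\JG(\hat{\xx},\hat{\boldsymbol{\lambda}}_1,\mathbf{0})$ by deleting \emph{all} the columns indexed by $\cs$, has full column rank $2n-d$: any null vector $(\yy_1,\yy_2,\zz)$ dies on the top block because $(\B,-I_{\rs})$ is invertible by~(\ref{row}), and then $\B\yy_2=0$ gives $\yy_2=0$ by~(\ref{col}). This single rank computation immediately gives $d'\le d$ \emph{and} the existence of $\cs'\subseteq\cs$ satisfying~(\ref{fullranksecond1}) (add back $d-d'$ of the deleted columns that increase the rank). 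The same device handles $\rs'$: one checks $\rank(\C,I_{\rs+n})=2n$ by a second application of~(\ref{row}) on the bottom block, and then any subset $\rs'\subseteq\rs$ of size $d'$ whose columns complete $G^{\cs'}$ to full rank works.

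Your proof reaches the same conclusions but via null spaces and left null spaces, and it leans on Lemma~\ref{coranks} as an intermediate step, which the paper's proof of this theorem does not need. Your left null-space analysis for $\rs'$ is valid (in particular the identification $\mathrm{Col}(\JF^{\cs'}(\hat{\xx}))=\mathrm{Col}(\B)$ and the two-fold use of~(\ref{row}) go through exactly as you say), but it is doing by duality what the paper does in one line by exhibiting the explicit full-rank matrix $(\C,I_{\rs+n})$. The paper's approach is shorter and keeps the whole argument on the column-rank side; your approach has the virtue of making explicit which coordinates carry the rank deficiency, which is conceptually informative even if not needed here.
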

\begin{proof}
Let
\[\C=\left(
    \begin{array}{ccc}
      \B & \mathcal{O}_{n,n-d} & -I_{\rs} \\
      \star & \B & \mathcal{O}_{n,d} \\
    \end{array}
  \right),\]
  be the matrix obtained from $\JG(\hat{\xx},\hat{\boldsymbol{\lambda}}_1,\mathbf{0})$ by deleting its $c_1,c_2,\ldots,c_d$-th
  columns. By (\ref{col}) and (\ref{row}) we claim that
\begin{equation*}\label{newrank1}
\rank(\C)=2n-d.
\end{equation*}
Hence $d'\leq d$. Besides there exists a positive-integer set
$\cs'\subseteq\cs$ such that the condition (\ref{fullranksecond1})
is satisfied.

According to (\ref{row}), 
it is clear that
\begin{equation*}\label{newrank2}
 \rank(\C,I_{\rs+n})=2n,
\end{equation*}
where $I_{\rs+n}=\left(\begin{array}{c}
                                        \mathcal{O}_{n,d} \\
                                        I_{\rs}
                                      \end{array}
\right)$. Hence we can choose    $\rs'\subseteq\rs$ such that  the
condition (\ref{fullranksecond2}) is satisfied.
\end{proof}

 If $d' \geq 1$, then Yamamoto repeated the first-order deflation to
$G(\xx,\boldsymbol{\lambda}_1,\bb_0)$ defined by
 (\ref{Yamapara}). By Theorem \ref{2nddfl},  we notice that
 Yamamoto's second-order deflation is equivalent to adding
$d'$ new smoothing parameters denoted by $\bb_1$ to  the first-order
differential system $\JF(\xx)\vv_1=\mathbf{0}$, to  derive a square
system
\begin{align}\label{Yamaper}H(\xx,\boldsymbol{\lambda},\bb)=\left\{
\begin{array}{r}
F(\xx)-I_{\rs}\bb_0=\mathbf{0}, \\
    \JF(\xx)\vv_1-I_{\rs'}\bb_1=\mathbf{0},\\
    G_{\xx,\boldsymbol{\lambda}_1,\bb_0}(\xx,\boldsymbol{\lambda}_1,\bb_0)\vv_2=\mathbf{0},
  \end{array}
\right.
\end{align}
where $\vv_2$ is a vector consisting of $2n-d'$ extra variables
$\boldsymbol{\lambda}_2$ and its entries at the positions
$c_1',c_2',\ldots,c_{d'}'$ are all $1$, and 
$\bb=(\bb_0^T,\bb_1^T)^T$,
$\boldsymbol{\lambda}=(\boldsymbol{\lambda}_1^T,\boldsymbol{\lambda}_2^T)^T$.
Let $\hat{\boldsymbol{\lambda}}_2$ denote the unique solution of the
linear system
$G_{\xx,\boldsymbol{\lambda}_1,\bb_0}(\hat{\xx},\hat{\boldsymbol{\lambda}}_1,\mathbf{0})\vv_2=\mathbf{0}$,
then $(\hat{\xx},\hat{\boldsymbol{\lambda}},\mathbf{0})$ is an
isolated solution of (\ref{Yamaper}).

Suppose  Theorem \ref{verification} is  applicable to the deflated
system $H(\xx,\boldsymbol{\lambda},\bb)$,  and yields inclusions for $\hat \xx$, $\hat{\boldsymbol{\lambda}}$, $\hat{\bb}_0$ and $\hat{\bb}_1$.
Then we have
\begin{align}\label{imper}
\widetilde{F}(\hat \xx)=F(\hat \xx)-I_{\rs}\hat \bb_0=\mathbf{0}
~{\text {and}}~ \widetilde{F}_{\xx} (\hat \xx) \hat \vv_1 =\JF(\hat
\xx)\hat \vv_1=I_{\rs'}\hat \bb_1,
 \end{align}
where smoothing parameters $\hat \bb_1$ might be very small, but are
not guaranteed to be zeros.  Therefore,  one can only prove the
existence of an isolated solution $\hat \xx$ of a perturbed system
$\widetilde{F}(\xx)$, which satisfies the first-order differential
condition approximately.

  In order to  verify  the   existence of an isolated singular
solution of a slightly perturbed system, we should  add the
smoothing parameters $\bb_1$  back to the original system. Let us
consider the modified system:
\begin{align}\label{liper}\widetilde{H}(\xx,\boldsymbol{\lambda},\bb)=\left\{
\begin{array}{r}
F(\xx)-I_{\rs}\bb_0-X_1\bb_1=\mathbf{0}, \\
    \JF(\xx)\vv_1-I_{\rs'}\bb_1=\mathbf{0},\\
    \widetilde{G}_{\xx,\boldsymbol{\lambda}_1,\bb_0}(\xx,\boldsymbol{\lambda}_1,\bb_0,\bb_1)\vv_2=\mathbf{0},
  \end{array}
\right.
\end{align}
where $X_1=(x_{c_1'} \ee_{k_1'}, \ldots, x_{c_{d'}'} \ee_{k_{d'}'})$
and
%
%
\begin{align}\widetilde{G}(\xx,\boldsymbol{\lambda}_1,\bb_0,\bb_1)=\left\{
\begin{array}{r}
F(\xx)-I_{\rs}\bb_0-X_1\bb_1=\mathbf{0}, \\
    \JF(\xx)\vv_1-I_{\rs'}\bb_1=\mathbf{0}.
  \end{array}
\right.
\end{align}

\begin{theorem}\label{singularsolution}
Let
\begin{align}\label{perF}
\perF=F(\xx)-I_{\rs}\bb_0-X_1\bb_1,
\end{align}
 then we  have
\begin{align}
\JF(\xx)\vv_1-I_{\rs'}\bb_1=\mathbf{0}\Longleftrightarrow\widetilde{F}_{\xx}(\xx,\bb)\vv_1=\mathbf{0}.
\end{align}
\end{theorem}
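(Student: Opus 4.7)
The proof is essentially a direct computation of the Jacobian $\widetilde{F}_{\xx}(\xx,\bb)$ combined with the observation that $X_1$ was designed specifically so that the perturbation in the original system, when differentiated and multiplied by $\vv_1$, exactly reproduces the perturbation $I_{\rs'}\bb_1$ in the first-order differential system.

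The plan is to differentiate $\perF = F(\xx) - I_{\rs}\bb_0 - X_1\bb_1$ term by term with respect to $\xx$. The first term yields $\JF(\xx)$. The second term $I_{\rs}\bb_0$ does not depend on $\xx$, so it contributes $0$. For the third term, since $X_1\bb_1 = \sum_{i=1}^{d'} b_{1,i}\, x_{c_i'}\, \ee_{k_i'}$, its derivative with respect to $\xx$ is a matrix $M$ whose $c_i'$-th column equals $b_{1,i}\ee_{k_i'}$ and whose remaining columns are zero. Hence
\begin{equation*}
\widetilde{F}_{\xx}(\xx,\bb) = \JF(\xx) - M.
\end{equation*}

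Next I would multiply both sides by $\vv_1$. Here the key structural fact kicks in: by Theorem \ref{2nddfl} we have $\cs' \subseteq \cs$, and by the construction of $\vv_1$ in the first-order deflation, the entries of $\vv_1$ at positions $c_1,\ldots,c_d$ are all fixed to $1$. In particular, $(\vv_1)_{c_i'} = 1$ for every $i = 1,\ldots,d'$. Consequently,
\begin{equation*}
M \vv_1 = \sum_{i=1}^{d'} b_{1,i}\,\ee_{k_i'}\,(\vv_1)_{c_i'} = \sum_{i=1}^{d'} b_{1,i}\,\ee_{k_i'} = I_{\rs'}\bb_1,
\end{equation*}
so $\widetilde{F}_{\xx}(\xx,\bb)\vv_1 = \JF(\xx)\vv_1 - I_{\rs'}\bb_1$, from which the claimed equivalence is immediate.

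There is no real obstacle: the statement is a bookkeeping identity. The only place where one has to be careful is in verifying that the selection rules $\cs' \subseteq \cs$ from Theorem \ref{2nddfl} together with the normalization convention $(\vv_1)_{c_i} = 1$ interact correctly — this is what makes the particular choice $X_1 = (x_{c_1'}\ee_{k_1'},\ldots,x_{c_{d'}'}\ee_{k_{d'}'})$ the \emph{right} way to lift the perturbations $\bb_1$ back into the original equations. Any alternative placement of the $x$-variables would fail this identity.
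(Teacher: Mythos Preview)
Your proof is correct and follows essentially the same route as the paper's: both compute $\widetilde{F}_{\xx}(\xx,\bb)\vv_1$ directly, identify the derivative of $X_1\bb_1$ as a matrix with nonzero columns only in positions $c_1',\ldots,c_{d'}'$, and then invoke $\cs'\subseteq\cs$ together with $(\vv_1)_{c_i}=1$ to conclude that this matrix times $\vv_1$ equals $I_{\rs'}\bb_1$. The paper's write-up is slightly terser (it multiplies by $\vv_1$ immediately rather than first naming the Jacobian), but the argument is the same.
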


\begin{proof}
Let \[\bb_1=(b_{1,1},b_{1,2},\ldots,b_{1,d'})^T\mbox{ and }\vv_1=(
\lambda_1,\cdots,\underset{c_1}1,\cdots,
  \underset{c_{d}} 1,\cdots,\lambda_{n-d})^T,\]
 then
\begin{align*}
  \widetilde{F}_{\xx}(\xx,\bb)\vv_1  &= \JF(\xx)\vv_1-(\mathbf{0},\cdots,\underset{c'_1}{b_{1,1}\ee_{k'_1}},\cdots,
  \underset{c'_{d'}}{b_{1,d'}\ee_{k'_{d'}}},\cdots,\mathbf{0})\, \vv_1\\
  & = \JF(\xx)\vv_1-(\ee_{k_1'},\ldots,\ee_{k_{d'}'})\, \bb_1 ~({\rm since}~
  \cs'\subseteq\cs)\\
  & = \JF(\xx)\vv_1-I_{\rs'}\bb_1
\end{align*}
\end{proof}

According to Theorem \ref{singularsolution}}, we can rewrite the
system (\ref{liper}) as
\begin{align}\label{liper2}\widetilde{H}(\xx,\boldsymbol{\lambda},\bb)=\left\{
\begin{array}{r}
\perF=\mathbf{0},\\
\widetilde{F}_{\xx}(\xx,\bb)\vv_1=\mathbf{0},\\
    \widetilde{G}_{\xx,\boldsymbol{\lambda}_1,\bb_0}(\xx,\boldsymbol{\lambda}_1,\bb_0,\bb_1)\vv_2=\mathbf{0}.
  \end{array}
\right.
\end{align}
Therefore, if we can
certify that $(\hat{\xx},\hat{\boldsymbol{\lambda}},\hat{\bb})$ is a
regular solution of the augmented system
$\widetilde{H}(\xx,\boldsymbol{\lambda},\bb)$ based on Theorem
\ref{verification}, then by (\ref{liper2}),
$\hat{\xx}$ is guaranteed to be an isolated  singular solution of
$\widetilde{F}(\xx,\hat{\bb})$.


\begin{theorem}\label{samerank}
Jacobian matrices of (\ref{Yamaper}) and (\ref{liper}) share the same null space.
\end{theorem}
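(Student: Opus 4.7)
I would compare the Jacobian matrices $M_H$ and $M_{\widetilde H}$ of (\ref{Yamaper}) and (\ref{liper}) at $(\hat{\xx},\hat{\boldsymbol{\lambda}},\mathbf{0})$. Since $\widetilde H$ is obtained from $H$ only by inserting the term $-X_1\bb_1$ in the first block and replacing $G$ by $\widetilde G$ in the third block, and since $X_1\bb_1$ is linear in each of $\xx$ and $\bb_1$ (so $\partial(X_1\bb_1)/\partial \xx$ depends on $\bb_1$ only), all derivatives of the new terms with respect to $\xx,\boldsymbol{\lambda}_1,\boldsymbol{\lambda}_2,\bb_0$ vanish identically at $\bb_1=\mathbf{0}$. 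Thus $M_H$ and $M_{\widetilde H}$ agree in every column outside the $\bb_1$-block.

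For the $\bb_1$-columns, direct differentiation gives $\partial\widetilde H_1/\partial\bb_1=-X_1(\hat{\xx})$ and $\partial\widetilde H_2/\partial\bb_1=-I_{\rs'}$ (coinciding with $H$). For $\widetilde H_3$ I use $\partial\widetilde G_{\xx,\boldsymbol{\lambda}_1,\bb_0}/\partial b_{1,i}=-\ee_{k_i'}(\ee_{c_i'})^T$ in the top-left block (and zero elsewhere) together with $\vv_2[c_i']=1$ (which follows from $\cs'\subseteq\cs\subseteq\{1,\ldots,n\}$ by Theorem~\ref{2nddfl}) to obtain $\partial\widetilde H_3/\partial\bb_1=(-I_{\rs'};\,\mathbf{0})^T$. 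Writing $\Delta=M_{\widetilde H}-M_H$, the matrix $\Delta$ is nonzero only in its $d'$ rightmost columns, whose $i$-th equals $\delta_i=(-\hat{x}_{c_i'}\ee_{k_i'};\,\mathbf{0};\,-\ee_{k_i'};\,\mathbf{0})^T$.

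I would then show that each $\delta_i$ lies in the column span of $M_H$ via a preimage $w_i$ having no $\bb_1$-component. Since $k_i'\in\rs'\subseteq\rs$, there is a unique $j_i$ with $k_{j_i}=k_i'$; setting $w_i^{(\bb_0)}=\hat{x}_{c_i'}\ee_{j_i}$ (the $j_i$-th standard basis vector of $\CC^d$) reproduces $-\hat{x}_{c_i'}\ee_{k_i'}$ in the first block through the $-I_{\rs}$ column of $M_H$, and a parallel choice $w_i^{(\boldsymbol{\mu}_2)}$ using the $-I_{\rs}$ block inside $G^{(-c')}$ reproduces $(-\ee_{k_i'};\,\mathbf{0})^T$ in the third block. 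Setting all other entries of $w_i$ to zero, we obtain $\Delta=M_H K E^T$, where $K=(w_1,\ldots,w_{d'})$ has zero $\bb_1$-rows and $E^T$ selects the $\bb_1$-columns of the domain. Hence $M_{\widetilde H}=M_H(I+KE^T)$; since $K$'s $\bb_1$-rows vanish, $I+KE^T$ is block-upper-triangular with identity diagonal blocks and therefore invertible, so $v\mapsto(I+KE^T)v$ is a linear bijection between $\mathrm{Null}(M_{\widetilde H})$ and $\mathrm{Null}(M_H)$.

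The main obstacle is the index bookkeeping involved in constructing the preimages $w_i$ with vanishing $\bb_1$-component; the crucial ingredients are $\rs'\subseteq\rs$ and $\cs'\subseteq\cs\subseteq\{1,\ldots,n\}$ from Theorem~\ref{2nddfl}, together with the rank structure from (\ref{col})--(\ref{row}) and (\ref{fullranksecond1})--(\ref{fullranksecond2}).
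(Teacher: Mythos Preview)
Your approach is essentially the paper's: both compute the two Jacobians explicitly, observe that they differ only in the $\bb_1$-column block, and eliminate the extra entries by column operations using the $-I_{\rs}$ blocks that sit in the $\bb_0$-columns and inside $\JG^{\cs'}$ (these are precisely the paper's ``third and sixth columns''). Your construction of the preimages $w_i$ is a more explicit rendering of exactly that reduction, and the index bookkeeping is correct --- the crucial inputs are indeed $\rs'\subseteq\rs$ and $\cs'\subseteq\cs\subseteq\{1,\ldots,n\}$ from Theorem~\ref{2nddfl}.

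There is one small gap at the end: from $M_{\widetilde H}=M_H(I+KE^T)$ with $I+KE^T$ invertible you obtain only a \emph{bijection} $\mathrm{Null}(M_{\widetilde H})\to\mathrm{Null}(M_H)$, whereas the statement asserts the null spaces are \emph{equal}. To upgrade this, repeat the argument preceding Theorem~\ref{terminate} one level up (using (\ref{fullranksecond2}) in place of (\ref{row})): every $u\in\mathrm{Null}(M_H)$ has zero $\bb_1$-component. Now combine with your own observation that $K$ has zero $\bb_1$-rows: since $E^T$ projects onto the $\bb_1$-coordinates, $(I+KE^T)u=u$ for such $u$, so $u\in\mathrm{Null}(M_{\widetilde H})$; conversely if $v\in\mathrm{Null}(M_{\widetilde H})$ then $(I+KE^T)v\in\mathrm{Null}(M_H)$ has zero $\bb_1$-component, but the $\bb_1$-part of $(I+KE^T)v$ equals that of $v$, forcing $v_{\bb_1}=0$ and hence $v=(I+KE^T)v\in\mathrm{Null}(M_H)$. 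Thus the bijection is the identity and the null spaces coincide. (For the only downstream use, Theorem~\ref{maintheorem}, equality of coranks --- which you already have --- would suffice.)
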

\begin{proof}
The Jacobian matrix $\JH(\hat{\xx},\hat{\boldsymbol{\lambda}},\mathbf{0})$ of (\ref{Yamaper}) computes to
\begin{equation}\label{jaco1}
\left(
\begin{array}{cccc}
      \begin{array}{cc}
      \JF(\hat{\xx}) & \mathcal{O}_{n,n-d}  \\
      \HF(\hat{\xx})\hat{\vv}_1 & \B  \\
    \end{array} & \begin{array}{c}
                    -I_{\rs} \\
                    \mathcal{O}_{n,d}
                  \end{array}
     & \mathcal{O}_{2n,2n-d'} & \begin{array}{c}
         \mathcal{O}_{n,d'} \\
         -I_{\rs'}
       \end{array}
      \\
  \star & \begin{array}{c}
            \mathcal{O}_{n,d} \\
            \mathcal{O}_{n,d}
          \end{array}
   & \begin{array}{ccc}
      F_{\xx}^{\cs'}(\hat{\xx}) & \mathcal{O}_{n,n-d} & -I_{\rs} \\
      \star & \B & \mathcal{O}_{n,d} \\
    \end{array} & \begin{array}{c}
            \mathcal{O}_{n,d} \\
            \mathcal{O}_{n,d}
          \end{array}
\end{array}
\right),
\end{equation}
while the Jacobian matrix
$\widetilde{H}_{\xx,\boldsymbol{\lambda},\bb}(\hat{\xx},\hat{\boldsymbol{\lambda}},\mathbf{0})$
of (\ref{liper}) computes to
\begin{equation}\label{jaco2}
\left(
\begin{array}{cccc}
      \begin{array}{cc}
      \JF(\hat{\xx}) & \mathcal{O}_{n,n-d}  \\
      \HF(\hat{\xx})\hat{\vv}_1 & \B  \\
    \end{array} & \begin{array}{c}
                    {-I_{\rs}} \\
                    \mathcal{O}_{n,d}
                  \end{array}
     & \mathcal{O}_{2n,2n-d'} & \begin{array}{c}
         {-\hat{X}} \\
         -I_{\rs'}
       \end{array}
      \\
  \star & \begin{array}{c}
            \mathcal{O}_{n,d} \\
            \mathcal{O}_{n,d}
          \end{array} & \begin{array}{ccc}
      F_{\xx}^{\cs'}(\hat{\xx}) & \mathcal{O}_{n,n-d} & {-I_{\rs}} \\
      \star & \B & \mathcal{O}_{n,d} \\
    \end{array} & \begin{array}{c}
                    {-I_{\rs'}} \\
                    \mathcal{O}_{n,d'}
                  \end{array}
\end{array}
\right),
\end{equation}
where the matrix $\hat{X}$ consists of vectors
$\hat{x}_{\cs'(i)}\ee_{\rs'(i)}$, $i=1,\ldots,d'$.  Since
$\rs'\subseteq\rs$,
 we can reduce the last column of
the block matrix (\ref{jaco2}) by its third and sixth columns to get
the block matrix (\ref{jaco1}). Therefore, two Jacobian matrices
(\ref{jaco1}) and (\ref{jaco2}) are of the same corank and share the
same null space.
\end{proof}

Suppose the Jacobian matrix
$\JH(\hat{\xx},\hat{\boldsymbol{\lambda}},\mathbf{0})$ is still
singular, i.e.,
\begin{align}
\rank(\JH(\hat{\xx},\hat{\boldsymbol{\lambda}},\mathbf{0}))=4n-d'',~(d''\geq
1).
\end{align}
 Let $\cs''=\{c_1'',c_2'',\ldots,c_{d''}''\}$  and
$\rs''=\{k_1'',k_2'',\ldots,k_{d''}''\}$ be two positive-integer
sets such that
\begin{equation}\label{fullranksecond3}
\rank(\JH^{\cs''}(\hat{\xx},\hat{\boldsymbol{\lambda}},\mathbf{0}))=4n-d''
\end{equation}
\begin{equation}\label{fullranksecond4}
\rank\left(\JH^{\cs''}(\hat{\xx},\hat{\boldsymbol{\lambda}},\mathbf{0}),
I_{\rs''+3n}\right)=4n, \end{equation}
 where
$\JH^{\cs''}(\hat{\xx},\hat{\boldsymbol{\lambda}},\mathbf{0})$ is a
matrix obtained from
$\JH(\hat{\xx},\hat{\boldsymbol{\lambda}},\mathbf{0})$ by deleting
its $c_1'',c_2'',\ldots,c_{d''}''$-th columns, and
\begin{align}
I_{\rs''+3n}=\left(\begin{array}{c}
                                        \mathcal{O}_{3n,d''} \\
                                        I_{\rs''}
                                      \end{array}
\right),  ~ I_{\rs''}=( 
\ee_{k_1''},\ee_{k_2''},\ldots,\ee_{k_{d''}'}). \end{align}

\begin{theorem}\label{3rddfl}
Comparing to $\JG(\hat{\xx},\hat{\boldsymbol{\lambda}}_1,\mathbf{0})$, the corank of $\JH(\hat{\xx},\hat{\boldsymbol{\lambda}},\mathbf{0})$
does not increase, i.e.,  $d'' \leq d'$. Moreover, we can choose
$\cs''$ and $\rs''$ such that  $\cs'' \subseteq \cs'$,
$\rs''\subseteq\rs'$ and satisfy (\ref{fullranksecond3}) and (\ref{fullranksecond4}) respectively.
\end{theorem}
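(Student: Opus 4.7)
The plan is to mirror the proof of Theorem \ref{2nddfl}: I would show that deleting only the $\cs'$-columns from $\JH(\hat{\xx},\hat{\lam},\mathbf{0})$ already produces a matrix of full column rank, which simultaneously yields $d''\le d'$, the subset property $\cs''\subseteq\cs'$, and---via a twin augmentation argument---also $\rs''\subseteq\rs'$.

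I would first write $\JH^{\cs'}(\hat{\xx},\hat{\lam},\mathbf{0})$ in block form. Since $\cs'\subseteq\cs\subseteq\{1,\ldots,n\}$ by Theorem \ref{2nddfl}, the deletions happen entirely inside the $\xx$-block of (\ref{jaco1}). The key structural observation is that the $\lam_2$-block in the bottom $2n$ rows---namely the derivative of $\JG(\xx,\lam_1,\bb_0)\vv_2$ with respect to $\lam_2$---collapses to $\JG^{\cs'}(\hat{\xx},\hat{\lam}_1,\mathbf{0})$ precisely because the entries of $\vv_2$ at positions $\cs'$ are frozen to $1$. Thus
\[
\JH^{\cs'}(\hat{\xx},\hat{\lam},\mathbf{0})=
\begin{pmatrix}
\JG^{\cs'} & \mathcal{O}_{2n,\,2n-d'} & -I_{\rs'+n} \\
\star & \JG^{\cs'} & \mathcal{O}_{2n,d'}
\end{pmatrix},
\]
with the same block $\JG^{\cs'}(\hat{\xx},\hat{\lam}_1,\mathbf{0})$ appearing twice, in disjoint block-rows and block-columns, and with no $\bb_1$-dependence in the bottom $2n$ rows.

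Next I would prove $\rank(\JH^{\cs'})=4n-d'$ by a null-space computation. For a null vector $(\uu_1,\uu_2,\uu_3)$ matched to the three block-columns, the top $2n$ rows give $\JG^{\cs'}\uu_1-I_{\rs'+n}\uu_3=\mathbf{0}$. Since $(\JG^{\cs'},I_{\rs'+n})$ is $2n\times 2n$ of rank $2n$ by (\ref{fullranksecond2}), it is invertible, forcing $\uu_1=\uu_3=\mathbf{0}$. Substituting back, the bottom $2n$ rows reduce to $\JG^{\cs'}\uu_2=\mathbf{0}$, and (\ref{fullranksecond1}) forces $\uu_2=\mathbf{0}$. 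Hence $\corank(\JH)\le d'$, that is, $d''\le d'$, and a standard basis-extension within $\cs'$ yields $\cs''\subseteq\cs'$ of size $d''$ satisfying (\ref{fullranksecond3}).

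For (\ref{fullranksecond4}) I would run essentially the same computation on the augmented matrix $(\JH^{\cs'},I_{\rs'+3n})$: adding a coefficient vector $\uu_4$ for the appended $d'$ unit columns (which live only in rows $3n+1,\ldots,4n$), the top $2n$ rows are unaffected and again kill $\uu_1,\uu_3$ via (\ref{fullranksecond2}), while the bottom $2n$ rows now collapse to $\JG^{\cs'}\uu_2+I_{\rs'+n}\uu_4=\mathbf{0}$, so a second application of (\ref{fullranksecond2}) forces $\uu_2=\uu_4=\mathbf{0}$. Therefore $\rank(\JH^{\cs'},I_{\rs'+3n})=4n$, and since $\JH^{\cs''}$ contains $\JH^{\cs'}$ as a column-submatrix, $(\JH^{\cs''},I_{\rs'+3n})$ has rank $4n$ as well; extracting $d''$ columns of $I_{\rs'+3n}$ that are linearly independent modulo the column space of $\JH^{\cs''}$ produces the required $\rs''\subseteq\rs'$. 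The main obstacle is the block bookkeeping: verifying the two occurrences of $\JG^{\cs'}$ in $\JH^{\cs'}$ and the absence of $\bb_1$-dependence in the bottom rows. Once that structural fact is in hand, the rank equalities reduce to two applications each of (\ref{fullranksecond1}) and (\ref{fullranksecond2}).
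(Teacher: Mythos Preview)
Your proposal is correct and follows essentially the same strategy as the paper: delete the $\cs'$-columns from $\JH(\hat{\xx},\hat{\lam},\mathbf{0})$, show the resulting matrix has rank $4n-d'$ (giving $d''\le d'$ and $\cs''\subseteq\cs'$), then show that appending $I_{\rs'+3n}$ gives rank $4n$ (giving $\rs''\subseteq\rs'$). The paper's own proof merely asserts these two rank equalities by analogy with Theorem~\ref{2nddfl}, while you supply the block decomposition and explicit null-space computations that justify them; in particular your identification of the repeated diagonal block $\JG^{\cs'}$ and the absence of $\bb_1$-dependence in the bottom $2n$ rows is exactly the structural content behind the paper's claim.
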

\begin{proof}
Similar to the proof of Theorem \ref{2nddfl},  let
$\JH^{\cs'}(\hat{\xx},\hat{\boldsymbol{\lambda}},\mathbf{0})$ be the
matrix obtained from
$\JH(\hat{\xx},\hat{\boldsymbol{\lambda}},\mathbf{0})$ by deleting
its $c'_1,c'_2,\ldots,c'_d$-th
  columns. By (\ref{fullranksecond1}) and (\ref{fullranksecond2}), we claim that
\begin{equation*}
\rank(\JH^{\cs'}(\hat{\xx},\hat{\boldsymbol{\lambda}},\mathbf{0}))=4n-d'.
\end{equation*}
Therefore, $d''\leq d'$, and there exists a positive-integer set
$\cs''\subseteq\cs'$ such that the condition (\ref{fullranksecond3})
is satisfied.

Meanwhile, we know that
$\rank(\JG^{\cs'}(\hat{\xx},\hat{\boldsymbol{\lambda}}_1,\mathbf{0}),I_{\rs'+n})=2n$, then
\begin{equation*}
 \rank(\JH^{\cs'}(\hat{\xx},\hat{\boldsymbol{\lambda}},\mathbf{0}),I_{\rs'+3n})=4n,
\end{equation*}
where $I_{\rs'+3n}=\left(\begin{array}{c}
                                        \mathcal{O}_{3n,d'} \\
                                        I_{\rs'}
                                      \end{array}
\right)$. Therefore, we can choose    $\rs''\subseteq\rs'$  such that the
condition (\ref{fullranksecond4}) is satisfied.
\end{proof}


\begin{example}\label{ex1}\cite[DZ1]{DZ:2005}
Consider a polynomial system
\[F=\{x_1^4-x_2x_3x_4, x_2^4-x_1x_3x_4, x_3^4-x_1x_2x_4, x_4^4-x_1x_2x_3\}.\]
The system $F$ has $(0,0,0,0)$ as a $131$-fold isolated zero.
\end{example}
Since $\JF(\hat{\xx})=\mathcal{O}_{4,4}$, we derive $d=4$,
$\cs=\rs=\{1,2,3,4\}$ and $\vv_1=(1,1,1,1)^T$
\begin{align*}
G(\xx,\bb_0)=\left\{\begin{array}{r}
         F(\xx)-I_{\rs}\bb_0=\mathbf{0}, \\
         4x_1^3-x_3x_4-x_2x_4-x_2x_3=0, \\
         4x_2^3-x_3x_4-x_1x_4-x_1x_3=0, \\
         4x_3^3-x_2x_4-x_1x_4-x_1x_2=0, \\
         4x_4^3-x_2x_3-x_1x_3-x_1x_2=0.
       \end{array}
\right.
\end{align*}
The Jacobian matrix of $G(\xx,\bb_0)$ at $(\mathbf{0}, \mathbf{0})$ is
\begin{equation*}
G_{\xx,\bb_0}(\mathbf{0}, \mathbf{0})=\left(
    \begin{array}{cc}
      \mathcal{O}_{4,4} & -I_{\rs} \\
      \mathcal{O}_{4,4} & \mathcal{O}_{4,4} \\
    \end{array}
  \right),
\end{equation*}
 Hence, $d'=4$, $\cs'=\rs'=\{1,2,3,4\}$
and
\begin{align}\label{example}
H(\xx,\boldsymbol{\lambda},\bb)=\left\{\begin{array}{r}
         F(\xx)-I_{\rs}\bb_0-X_1\bb_1=\mathbf{0}, \\
         \JF(\xx)\vv_1-I_{\rs'}\bb_1=\mathbf{0},\\
         \widetilde{G}_{\xx,\bb_0}(\xx,\bb_0,\bb_1)\vv_2=\mathbf{0},
       \end{array}
\right.
\end{align}
where $\vv_2=(1,1,1,1,\lambda_1,\lambda_2,\lambda_3,\lambda_4)^T$,
and
$\widetilde{G}_{\xx,\bb_0}(\mathbf{0},\mathbf{0},\mathbf{0})\vv_2=\mathbf{0}$
has a unique solution $(\hat \lambda_1,\hat \lambda_2,\hat
\lambda_3,\hat \lambda_4)=(0,0,0,0)$. The Jacobian matrix of
$H(\xx,\boldsymbol{\lambda},\bb)$ at
$(\mathbf{0},\mathbf{0},\mathbf{0})$ is
\begin{equation*}
H_{\xx,\boldsymbol{\lambda},\bb}(\mathbf{0},\mathbf{0},\mathbf{0})=\left(
    \begin{array}{cccc}
      \mathcal{O}_{4,4} & -I_{\rs} & \mathcal{O}_{4,4} & \mathcal{O}_{4,4} \\
      \mathcal{O}_{4,4} & \mathcal{O}_{4,4} & \mathcal{O}_{4,4} & -I_{\rs'} \\
      \mathcal{O}_{4,4} & \mathcal{O}_{4,4} & -I_{\rs'} & -I_{\rs'} \\
      A & \mathcal{O}_{4,4} & \mathcal{O}_{4,4} & \mathcal{O}_{4,4}
    \end{array}
  \right),A=\left(
    \begin{array}{cccc}
      0 & -2 & -2 & -2 \\
      -2 & 0 & -2 & -2 \\
      -2 & -2 & 0 & -2 \\
      -2 & -2 & -2 & 0
    \end{array}
  \right).
\end{equation*}
The Jacobian matrix
$H_{\xx,\boldsymbol{\lambda},\bb}(\mathbf{0},\mathbf{0},\mathbf{0})$
is nonsingular. Therefore we obtain a regular and square system
$H(\xx,\boldsymbol{\lambda},\bb)$ and a perturbed system
\[\widetilde{F}(\xx,\bb)=\left\{\begin{array}{c}
                       x_1^4-x_2x_3x_4-b_1-b_5x_1 =0,\\
                       x_2^4-x_1x_3x_4-b_2-b_6x_2 =0,\\
                       x_3^4-x_1x_2x_4-b_3-b_7x_3 =0,\\
                       x_4^4-x_1x_2x_3-b_4-b_8x_4=0.
                     \end{array}
\right.\]

 Applying the verification method based on Theorem \ref{verification} to $H(\xx,\boldsymbol{\lambda},\bb)$,   we show  in Section \ref{mrver}  that a slightly
perturbed polynomial system $\widetilde{F}(\xx,\hat{\bb})$ for
\[|\hat{b}_i|\leq1.0e-321, i=1,2,\ldots,8\]
 has an isolated singular
solution $\hat{\xx}$ within \[|\hat{x}_i|\leq1.0e-321,
i=1,2,3,4.\]


\subsection{Higher-order deflations}

For higher-order deflations, in the following, we show inductively
how to add  new smoothing parameters properly  to the original
system in order to derive a square and regular deflated system for
certifying the existence of an isolated singular solution of a
slightly perturbed system.

 Let $H^{(0)}(\xx)=F(\xx)$, then for 
 the $(s+1)$-th deflation, we add  smoothing parameters  $\bb^{(s)}=(\bb_0^T, \ldots, \bb_{s}^T)^T$ and consider the following square system
\begin{align}\label{aug}
H^{(s+1)}(\xx,\boldsymbol{\lambda}^{(s+1)},\bb^{(s)})=
\left\{\begin{array}{rl}
         \widetilde{F}(\xx,\bb^{(s)})&=\mathbf{0}, \\
         \widetilde{F}_{\xx}(\xx,\bb^{(s)})\vv_1&=\mathbf{0}, \\
        &\vdots\\
 G^{(s)}_{\xx,\boldsymbol{\lambda}^{(s)}, \bb^{(s-1)}}
         (\xx,\boldsymbol{\lambda}^{(s)},\bb^{(s)})\vv_{s+1}&=\mathbf{0},
       \end{array}
\right.
\end{align}
where $\boldsymbol{\lambda}^{(s+1)}=(\boldsymbol{\lambda}_1^T,
\ldots, \boldsymbol{\lambda}_{s+1}^T)^T$ are extra variables
corresponding to the vectors $\{\vv_1, \ldots, \vv_{s+1}\}$,  $
G^{(s)}
(\xx,\boldsymbol{\lambda}^{(s)},\bb^{(s)})$ 
consists of the first $2^s n$ polynomials in
$H^{(s+1)}(\xx,\boldsymbol{\lambda}^{(s+1)},\bb^{(s)})$, and
\begin{equation}\label{perpoly}
  \widetilde{F}(\xx,\bb^{(s)})=F(\xx)-X_0\bb_0-X_1\bb_1-\cdots-X_s\bb_{s},
\end{equation}
 the matrix $X_j$ ($0 \leq j \leq s$) consists of vectors
$\frac{1}{j!}\cdot x^j_{\cs^{(j)}(i)}\cdot\ee_{\rs^{(j)}(i)}$,
$i=1,\ldots,d_j$, where  $\cs^{(j)}$
and $\rs^{(j)}$
are two positive-integer sets selected at the $j$-th order deflation
satisfying conditions obtained by replacing the polynomial system
$F(\xx)$ in  (\ref{col}) and (\ref{row}) by the $j$-th deflated
system $H^{(j)}(\xx,\boldsymbol{\lambda}^{(j)},\bb^{(j-1)})$ and
replacing $I_\rs$ by the matrix
 $I_{\rs^{(j)}+ (2^j-1)n}=\left(\begin{array}{c}
                                        \mathcal{O}_{(2^j-1) n,d_j} \\
                                        I_{\rs^{(j)}}
                                      \end{array}
\right),  ~ I_{\rs^{(j)}}=( 
\ee_{k_1^{(j)}},\ee_{k_2^{(j)}},\ldots,\ee_{k_{d_j}^{(j)}})$,
 where   $d_j$  is the corank of
$H^{(j)}_{\xx,\boldsymbol{\lambda}^{(j)},\bb^{(j-1)}}(\hat{\xx},\hat{\boldsymbol{\lambda}}^{(j)},\mathbf{0})$. 


\begin{theorem} \label{maintheorem}
The corank $d_{s+1}$ of
$H^{(s+1)}_{\xx,\boldsymbol{\lambda}^{(s+1)},\bb^{(s)}}(\hat{\xx},\hat{\boldsymbol{\lambda}}^{(s+1)},\mathbf{0})$
does not increase  and the number of deflations needed to derive a
regular solution of an augmented system (\ref{aug}) is less than the
depth of $\mathcal{D}_{\hat{\xx}}$, i.e., we have
\begin{align}\label{corankseq}
d_0 \geq d_1 \geq \cdots \geq d_{s+1} \geq \cdots \geq d_{\rho-1}=0.
\end{align}
 Moreover, we can choose $\cs^{(j)}$ and $\rs^{(j)}$
satisfying
\begin{align}\label{cjkj}
\cs^{(s)} \subseteq \cdots \subseteq \cs^{(0)}\mbox{ and }
 \rs^{(s)} \subseteq \cdots \subseteq \rs^{(0)}.
 \end{align}
\end{theorem}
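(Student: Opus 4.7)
The plan is to proceed by induction on $s$, using the base cases (the first- and second-order deflations) already established in Theorems \ref{terminate}, \ref{2nddfl}, \ref{samerank}, and \ref{3rddfl}. The inductive step splits into two logically distinct pieces: (i) the corank of the Jacobian of the lifted system $H^{(s+1)}$ (where the smoothing parameters $\bb_j$ have been folded back into the original equations via the matrices $X_j$) equals the corank of the corresponding Yamamoto-style system in which smoothing parameters enter each differential block independently, and (ii) the corank of the Yamamoto-style deflated system at level $s+1$ does not exceed $d_s$, and one can extract nested $\cs^{(s+1)}\subseteq\cs^{(s)}$ and $\rs^{(s+1)}\subseteq\rs^{(s)}$.

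For (i), I would generalize the column-reduction argument from the proof of Theorem \ref{samerank}. Because the inductive hypothesis gives $\rs^{(s)}\subseteq\rs^{(s-1)}\subseteq\cdots\subseteq\rs^{(0)}$, every column of $\JH^{(s+1)}(\hat{\xx},\hat{\boldsymbol{\lambda}}^{(s+1)},\mathbf{0})$ associated with the lifted block $X_s\bb_s$ occurs as a linear combination of columns already present as $-I_{\rs^{(j)}}$ blocks at earlier levels. An elementary column operation therefore eliminates every $X_j$-contribution without altering either the rank or the null space, reducing $\JH^{(s+1)}$ to the Jacobian of the pure Yamamoto iterate. This identifies the null spaces and the coranks of the two systems at every step.

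For (ii), I would repeat the block-structure argument of Theorems \ref{2nddfl} and \ref{3rddfl}. Deleting the $\cs^{(s)}$-columns from the Jacobian of $H^{(s+1)}$ produces a block lower-triangular matrix whose diagonal contains, up to a permutation, the matrix $H^{(s)\,\cs^{(s)}}_{\xx,\boldsymbol{\lambda}^{(s)},\bb^{(s-1)}}(\hat{\xx},\hat{\boldsymbol{\lambda}}^{(s)},\mathbf{0})$ appearing together with the $-I_{\rs^{(s)}}$ block sitting in an otherwise zero column set. The rank conditions imposed at the $s$-th step (the analogues of (\ref{col}) and (\ref{row}) for $H^{(s)}$) then force the deleted-column submatrix at step $s+1$ to have corank at most $d_s$, giving $d_{s+1}\leq d_s$. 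The same construction automatically produces $\cs^{(s+1)}\subseteq\cs^{(s)}$ and $\rs^{(s+1)}\subseteq\rs^{(s)}$ satisfying the appropriate analogues of (\ref{fullranksecond3})--(\ref{fullranksecond4}).

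Finally, to establish $d_{\rho-1}=0$, I combine part (i) with Lemma \ref{coranks} applied inductively: the coranks $d_0,d_1,\ldots$ of our augmented systems $H^{(s+1)}$ agree with the coranks of the Yamamoto deflations, which in turn agree with the coranks of the Leykin-Verschelde-Zhuang iterations of $\widetilde G$. By the result of Dayton and Zeng cited in the paper, a positive corank at step $s$ forces the existence of a differential functional of order $s+1$ in $\mathcal{D}_{\hat{\xx}}$, but the maximal order of an element of $\mathcal{D}_{\hat{\xx}}$ is $\rho-1$; hence $d_{\rho-1}=0$. The main technical obstacle I anticipate is the bookkeeping for the block structure of $\JH^{(s+1)}$: showing that the lifted perturbations $X_0\bb_0,\ldots,X_s\bb_s$ can be simultaneously eliminated by column reductions along precisely the $-I_{\rs^{(j)}}$ columns, so that the null-space equality in (i) and the nested inclusions (\ref{cjkj}) are maintained together at every level of the induction.
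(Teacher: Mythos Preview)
Your proposal is correct and follows essentially the same approach as the paper's own proof: the paper also argues by inductively applying Theorems \ref{2nddfl}, \ref{samerank}, and \ref{3rddfl} to show that the corank sequence of the modified deflations coincides with that of Yamamoto's deflations and is non-increasing, then invokes Theorem \ref{terminate} (hence the Dayton--Zeng depth bound) to conclude $d_{\rho-1}=0$, with (\ref{cjkj}) handled exactly as in Theorems \ref{2nddfl} and \ref{3rddfl}. Your write-up is simply more explicit about the column-reduction and block-structure mechanics that the paper leaves implicit in the phrase ``applying \ldots\ inductively.''
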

\begin{proof}
Applying Theorem \ref{2nddfl}, \ref{samerank} and \ref{3rddfl} inductively, we can show that
 the
above deflation process (\ref{aug}) produces a decreasing
nonnegative-integer sequence $d_0 \geq d_1 \geq \cdots \geq
d_{s+1}\geq \cdots $,
 which is as same as the sequence  consisting of  
  coranks  of the Jacobian matrices of the deflated
 systems by Yamamoto's method.  According to   Theorem \ref{terminate}, the number of
Yamamoto's deflations  to derive a regular solution of an augmented
system is bounded by the depth of $\mathcal{D}_{\hat{\xx}}$. Hence
the number of the modified deflations (\ref{aug}) is also bounded by
the depth of $\mathcal{D}_{\hat{\xx}}$. 
The proof of
(\ref{cjkj}) is similar to the proofs of Theorem \ref{2nddfl} and
\ref{3rddfl}.
\end{proof}
\begin{theorem}\label{breadth}
Suppose Theorem \ref{verification} is applicable to the augmented
system (\ref{aug}), and yields inclusions for $\hat{\xx}$,
$\hat{\lam}$ and $\hat{\bb}$. Then the perturbed system
$\widetilde{F}(\xx,\hat{\bb})$ has an isolated singular solution at
$\hat{\xx}$.
\end{theorem}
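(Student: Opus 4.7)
The approach is to unpack what Theorem \ref{verification} gives for the augmented system $H^{(s+1)}$ in (\ref{aug}) and then read off the singular-solution structure directly from its block form. First I would invoke Theorem \ref{verification}: the hypothesis that it is applicable and yields inclusions for $\hat{\xx}$, $\hat{\lam}$, $\hat{\bb}$ means there is a (unique in the verified box) point $(\hat{\xx}, \hat{\lam}, \hat{\bb})$ satisfying $H^{(s+1)}(\hat{\xx}, \hat{\lam}, \hat{\bb}) = \mathbf{0}$, and the Jacobian of $H^{(s+1)}$ with respect to $(\xx, \lam, \bb)$ is nonsingular throughout the box. Then I would extract facts block-by-block from (\ref{aug}). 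The first block yields $\widetilde{F}(\hat{\xx}, \hat{\bb}) = \mathbf{0}$, so $\hat{\xx}$ is a zero of $\widetilde{F}(\cdot, \hat{\bb})$. The second block reads $\widetilde{F}_{\xx}(\hat{\xx}, \hat{\bb}) \, \hat{\vv}_1 = \mathbf{0}$, and since the construction after (\ref{Yamapara}) fixes the entries of $\vv_1$ at positions $c_1^{(0)}, \ldots, c_{d_0}^{(0)}$ to the value $1$, the vector $\hat{\vv}_1$ is automatically nonzero; hence $\widetilde{F}_{\xx}(\hat{\xx}, \hat{\bb})$ has a nontrivial kernel and $\hat{\xx}$ is a singular zero of $\widetilde{F}(\cdot, \hat{\bb})$.

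For isolation I would proceed by contradiction. The inverse function theorem, applied to the square nonsingular map $H^{(s+1)}$ at $(\hat{\xx}, \hat{\lam}, \hat{\bb})$, tells me that this point is an isolated zero of $H^{(s+1)}$. Suppose $\xx^{(k)} \to \hat{\xx}$ with $\xx^{(k)} \neq \hat{\xx}$ and $\widetilde{F}(\xx^{(k)}, \hat{\bb}) = \mathbf{0}$. I would exploit that the rank conditions (\ref{col})--(\ref{row}), (\ref{fullranksecond1})--(\ref{fullranksecond2}), and the higher-order analogs used in Theorem \ref{maintheorem} are open on the coefficients of the cascade Jacobians; this lets me solve the block-triangular cascade of normalized linear systems for $\boldsymbol{\lambda}_1^{(k)}, \ldots, \boldsymbol{\lambda}_{s+1}^{(k)}$ uniquely and continuously in $(\xx^{(k)}, \hat{\bb})$, producing full solutions $(\xx^{(k)}, \lam^{(k)}, \hat{\bb}) \to (\hat{\xx}, \hat{\lam}, \hat{\bb})$ of $H^{(s+1)} = \mathbf{0}$. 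For $k$ large this places a second solution in the verified box, contradicting the uniqueness from Theorem \ref{verification}.

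The hardest step, and the one where I expect to spend most care, is this extension inside the isolation argument. By lower semicontinuity, the rank of $\widetilde{F}_{\xx}(\xx^{(k)}, \hat{\bb})$ is only guaranteed to be at least $n - d_0$; if it strictly exceeds $n - d_0$ then the normalization fixing $\vv_1$ at the indices $\cs^{(0)}$ to one can be incompatible with $\widetilde{F}_{\xx}(\xx^{(k)}, \hat{\bb}) \vv_1 = \mathbf{0}$, and the proposed extension breaks. I would want to rule out such rank-jumping zeros accumulating at $\hat{\xx}$, for instance by observing that they lie in the intersection of $\widetilde{F}(\cdot, \hat{\bb})^{-1}(\mathbf{0})$ with the proper Zariski-closed locus where the chosen $(n - d_0) \times (n - d_0)$ minor of $\widetilde{F}_{\xx}$ vanishes, and then handling that exceptional case by a separate dimension or multiplicity argument tied back to Theorem \ref{maintheorem}. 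Once this subtlety is cleared, the three ingredients---zero, singular Jacobian, isolation---combine to give the statement.
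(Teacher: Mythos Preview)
Your first paragraph is exactly the paper's proof: read off the first two blocks of (\ref{aug}) to get $\widetilde{F}(\hat{\xx},\hat{\bb})=\mathbf{0}$ and $\widetilde{F}_{\xx}(\hat{\xx},\hat{\bb})\hat{\vv}_1=\mathbf{0}$ with $\hat{\vv}_1\neq\mathbf{0}$, and conclude that $\hat{\xx}$ is a singular zero of $\widetilde{F}(\cdot,\hat{\bb})$. That is literally all the paper writes; its proof is three lines.

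Where you diverge is on isolation. The paper simply asserts ``Hence, $\hat{\xx}$ is an isolated singular solution'' with no further argument---it does not carry out anything like your contradiction-plus-extension scheme, and it does not address the rank-jumping issue you raise. So your second and third paragraphs are not an alternative route to the paper's argument; they are an attempt to fill a gap that the paper leaves open. Your instinct that the extension step is the delicate point is correct: if a nearby zero $\xx^{(k)}$ of $\widetilde{F}(\cdot,\hat{\bb})$ has Jacobian of rank strictly larger than $n-d_0$, the normalization forcing the $\cs^{(0)}$-entries of $\vv_1$ to be $1$ may have no solution, and the lift to $H^{(s+1)}$ fails. The paper offers no mechanism to exclude this. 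If you want to match the paper, you can stop after your first paragraph; if you want a genuinely complete proof of isolation, you will need to supply the missing argument yourself, and the paper will not help you there.
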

\begin{proof}
Since $(\hat{\xx}, \hat{\lam}, \hat{\bb})$ is the unique solution of
the augmented system (\ref{aug}), we have
\begin{align*}
\widetilde{F}(\hat{\xx},\hat{\bb})=\mathbf{0} ~~{\text {and}}~~
\widetilde{F}_{\xx}(\hat{\xx},\hat{\bb})\hat{\vv}_1=\mathbf{0},
~\hat{\vv}_1\neq\mathbf{0}.
\end{align*}
 Hence, $\hat{\xx}$
is an isolated singular solution of the slightly perturbed system
\begin{align*}
\widetilde{F}(\xx,\hat{\bb})=F(\xx)-X_0 \hat{\bb}_0-X_1{\hat
{\bb}_1}-\cdots-X_s\hat{\bb}_s.
\end{align*}
\end{proof}
\begin{example}\label{ex2}\cite[DZ2]{DZ:2005}
Consider a polynomial system
\[F=\{x^4, x^2y+y^4, z+z^2-7x^3-8x^2\}.\]
The system $F$ has $(0,0,-1)$ as a $16$-fold isolated zero.
\end{example}
The Jacobian matrix of $F$ at $\hat{\xx}=(0,0,-1)$ is
\begin{equation*}
\JF(\hat{\xx})=\left(\begin{array}{ccc}
        0 & 0 & 0 \\
        0 & 0 & 0 \\
        0 & 0 & -1
      \end{array}
\right),\mbox{ so that }d_0=2\mbox{ and we
choose}~\cs^{(0)}=\rs^{(0)}=\{1,2\}.
\end{equation*}
The first-order deflated system is
\begin{align*}
H^{(1)}(\xx,\boldsymbol{\lambda}_1,\bb_0)=\left\{\begin{array}{r}
         F(\xx)-X_0 \bb_0=\mathbf{0}, \\
         4x^3=0, \\
         2xy+x^2+4y^3=0, \\
         -21x^2-16x+\lambda_1+2z\lambda_1=0,
       \end{array}
\right.
\end{align*}
where
\begin{align*}
X_0= (\ee_1, \ee_2)=\left(
\begin{array}{cc}
1 & 0\\
0&1\\
0 & 0
\end{array}
\right), ~~ \bb_0=\left(
\begin{array}{c}
b_1\\
b_2
\end{array}
\right), ~~\vv_1=(1,1,\lambda_1)^T,~~ \boldsymbol{\lambda}_1=(\lambda_1).
\end{align*}
The Jacobian matrix of $H^{(1)}(\xx,\boldsymbol{\lambda}_1,\bb_0)$
at $(0,0,-1,0,0,0)$ is
\begin{equation*}
\left(
    \begin{array}{cccccc}
      0 & 0 & 0 & 0 & -1 & 0 \\
      0 & 0 & 0 & 0 & 0 & -1 \\
      0 & 0 & -1 & 0 & 0 & 0 \\
      0 & 0 & 0 & 0 & 0 & 0 \\
      0 & 0 & 0 & 0 & 0 & 0 \\
      -16 & 0 & 0 & -1 & 0 & 0
    \end{array}
  \right),~d_1=2\mbox{ and we choose}~\cs^{(1)}=\rs^{(1)}=\{1,2\}.
\end{equation*}
Therefore, we derive the second-order deflated system
\begin{align*}
H^{(2)}(\xx,\boldsymbol{\lambda}^{(2)},\bb^{(1)})=\left\{\begin{array}{r}
         F(\xx)-X_0\bb_0-X_1\bb_1=\mathbf{0}, \\
         \JF(\xx)\vv_1-X_1'\bb_1=\mathbf{0},\\
         {G}^{(1)}_{\xx,\boldsymbol{\lambda}_1,\bb_0}(\xx,\boldsymbol{\lambda}_1,\bb^{(1)})\vv_2=\mathbf{0},
       \end{array}
\right.
\end{align*}
where
\[X_1=\left(
\begin{array}{cc}
x & 0\\
0&y\\
0 & 0
\end{array}
\right), ~~ \bb_1=\left(
\begin{array}{c}
b_3\\
b_4
\end{array}
\right), ~~ \bb^{(1)}=(b_1,b_2,b_3,b_4)^T, ~~X_1'=\left(
\begin{array}{cc}
1 & 0\\
0&1\\
0 & 0
\end{array}
\right),\]
\[\vv_2=(1,1,\lambda_2,\lambda_3,\lambda_4,\lambda_5)^T,
~~ \boldsymbol{\lambda}^{(2)}=(\lambda_1,
\lambda_2,\lambda_3,\lambda_4,\lambda_5)^T.\]
 Moreover,
 ${G}^{(1)}_{\xx,\boldsymbol{\lambda}_1,\bb_0}(\hat{\xx},\hat{\boldsymbol{\lambda}}^{(1)},\mathbf{0})\vv_2=\mathbf{0}$
has a unique solution  $\hat
{\boldsymbol{\lambda}}_2=(0,-16,0,0)^T$.

For the third-order deflation, we have $d_2=1$,
$\cs^{(2)}=\rs^{(2)}=\{1\}$, so
\begin{align}\label{regularex2}
H^{(3)}(\xx,\boldsymbol{\lambda}^{(3)},\bb^{(2)})=
\left\{\begin{array}{r}
         F(\xx)-X_0\bb_0-X_1\bb_1-X_2\bb_2=\mathbf{0}, \\
         \JF(\xx)\vv_1-X_1'\bb_1-X'_2\bb_2=\mathbf{0},\\
         \JF(\xx)\vv_2'-X_0\vv_2''
         -X_1'\bb_1-X'_2\bb_2
         =\mathbf{0},\\
         F_{\xx\xx}(\xx)\vv_1\vv_2'+\JF^{\cs^{(0)}}(\xx)\lambda_3-X_2''\bb_2=\mathbf{0},\\
         {G}^{(2)}_{\xx,\boldsymbol{\lambda}^{(2)}, \bb^{(1)}}(\xx,\boldsymbol{\lambda}^{(2)},\bb^{(2)})\vv_3=\mathbf{0},
       \end{array}
\right.
\end{align}
where
\[X_2=\left(
\begin{array}{c}
\frac{1}{2}x^2 \\
0\\
0
\end{array}
\right), ~~ \bb_2= (b_5),~~X_2'=\left(
\begin{array}{c}
x \\
0\\
0
\end{array}
\right),~~X_2''=\left(
\begin{array}{c}
1 \\
0\\
0
\end{array}
\right),\]
\[\vv_2'=\left(\begin{array}{c}
                 1 \\
                 1 \\
                 \lambda_2
               \end{array}
\right),\vv_2''=\left(\begin{array}{c}
                 \lambda_4 \\
                 \lambda_5
               \end{array}
\right),\]
\[
 \vv_3=(1,\lambda_6,\lambda_7,\ldots,\lambda_{16})^T, ~~
 \boldsymbol{\lambda}^{(3)}=(\lambda_1,\ldots,
\lambda_{16})^T.\]
 Moreover,  ${G}^{(2)}_{\xx,\boldsymbol{\lambda}^{(2)},
\bb^{(1)}}(\hat{\xx},\hat{\boldsymbol{\lambda}}^{(2)},\mathbf{0})\vv_3=\mathbf{0}$
has a unique solution
\[\hat{\boldsymbol{\lambda}}_3=(-2,0,0,0,-16,0,0,-16,0,0,-42)^T.\]
Finally, the Jacobian matrix of
$H^{(3)}(\xx,\boldsymbol{\lambda}^{(3)},\bb^{(2)})$ is nonsingular,
and we obtain a perturbed polynomial system
\begin{align}\label{pertex2}
&\widetilde{F}(\xx,\bb)=F(\xx)-X_0\bb_0-X_1\bb_1-X_2\bb_2 \nonumber
\\ 
                      &=\{ x^4-b_1-b_3x-\frac{1}{2}b_5x^2,
                       x_2y+y^2-b_2-b_4y,
                       z+z^2-7x^3-8x^2\}.
\end{align}
 Note that
 \[\JF(\xx)\vv_1-X_0'\bb_1-X'_1\bb_2=\mathbf{0}\Leftrightarrow\widetilde{F}_{\xx}(\xx,\bb)\vv_1=\mathbf{0},\]
after applying the verification method to the above regular
augmented system (\ref{regularex2}),  we are able to verify  that a
slightly perturbed system $\widetilde{F}(\xx,\hat{\bb})$ defined in
(\ref{pertex2}) for
\[|\hat{b}_i|\leq1.0e-14, ~~i=1,2,\ldots,5\]
 has an isolated singular
solution $\hat{\xx}$ within \[|\hat{x}_i|\leq1.0e-14, ~i=1,2,\mbox{
and} ~|1+\hat{x}_3|\leq1.0e-14.\]

\section{An Algorithm for Verifying Multiple Roots}\label{mrver}

 Based on Theorem \ref{maintheorem} and
\ref{breadth}, we propose below an  algorithm for computing verified error
bounds such that, a slightly perturbed system is guaranteed to
possess an isolated singular solution within the computed bounds.

\begin{algorithm}\label{MDSP}VISS

\noindent \textbf{Input:} A square  polynomial system $F\in \CC[x_1,
\ldots, x_n]$, a point $\tilde{\xx}\in\mathbb{C}^n$ and a tolerance
$\varepsilon$.

\vskip 0.1cm
 \noindent \textbf{Output:} A perturbed
system $\widetilde{F}(\xx, \bb)$, inclusions $\mathbf{X}$ and
$\mathbf{B}$ for $\hat{\xx}$ and $\hat{\bb}$ such that
$\widetilde{F}(\hat \xx,\hat \bb)=\mathbf{0}$ and $\widetilde{F}_{\xx}(\hat \xx, \hat
\bb)$ is singular.
\begin{enumerate}

\item  Set $s:=0$, $m:=n$, $\widetilde{F}:=F$, $G:=\widetilde{F}$,
$\yy:=\xx$, and $\tilde{\yy}:=\tilde{\xx}$.

  \item Compute $d:=n-\rank(\JF(\tilde{\xx}),\varepsilon)$,
   select integer sets $\cs$ and $\rs$
 satisfying (\ref{col}) and (\ref{row}) respectively.

  \item Set
  $\widetilde{F}:=\widetilde{F}+X_s\bb_s$,
   where the matrix $X_s$ consists of vectors $\frac{1}{s!}\cdot x^s_{\cs(i)}\cdot\ee_{\rs(i)}$, $i=1,\ldots,d$.

      \begin{enumerate}

        \item  If $s \geq 1$, then set $G:=\widetilde{F}$; for $j$ from 1 to s do\\
             $G:=\{G, G_{\yy}\vv_{j}\}$;
             $\yy:=(\yy,\boldsymbol{\lambda}_{j},\bb_{j-1}).$

      \item
  Compute $\tilde{\yy}:=(\tilde{\yy},\mathrm{LeastSquares}(G_{\yy}(\tilde{\yy})
  \vv_{s+1}=\mathbf{0}),\mathbf{0})$;

  \item Set $G:=\{G, G_{\yy}\vv_{s+1}\}$; $\yy:=(\yy,\boldsymbol{\lambda}_{s+1},\bb_{s})$;  $m:=2m$.


 \end{enumerate}

  \item Compute $d:=m-\rank(G_{\yy}(\tilde{\yy}),\varepsilon)$;

  \begin{enumerate}

\item  If  $d=0$, apply \verb"verifynlss"  to   $G$ and $\tilde{\yy}$ to compute inclusions $\mathbf{X}$ and $\mathbf{B}$ for $\hat{\xx}$ and $\hat{\bb}$.

\item  Otherwise, select $\cs$, $\rs$ satisfying (\ref{col}),(\ref{row}) for the polynomial system $G$,
  set $s:=s+1$, $\yy=\xx$ and go back to Step 3.

\end{enumerate}

\end{enumerate}
\end{algorithm}

\paragraph{Example 3.1}(continued) Given an approximate singular solution
$\tilde{\xx}=(.0003445,.0009502,.0003171,.0006948)$ and a tolerance
$\varepsilon=0.005$, we obtain the augmented system (\ref{example})
and a point
\[\tilde{\yy}=(\tilde{\xx},0.8009\times10^{-6},0.4236\times10^{-7},0.8859\times10^{-7},0.5374\times10^{-7},0,\ldots,0).\]
After running \verb"verifynlss"$(H,\tilde{\yy})$ in Matlab
\cite{RumpINT}, it yields
\[-1.0e-321\leq \hat{x}_i\leq1.0e-321,\mbox{ for }i=1,2,3,4,\]
\[-1.0e-321\leq \hat{b}_i\leq1.0e-321,\mbox{ for }i=1,2,\ldots,8.\]
By Theorem \ref{breadth}, this proves that the perturbed polynomial
system $\widetilde{F}(\xx,\hat{\bb})$ $(|\hat{b}_i|\leq1.0e-321,
i=1,2,\ldots,8)$ has an isolated singular solution $\hat{\xx}$
within $|\tilde{x}_i|\leq1.0e-321, i=1,2,3,4$.

\paragraph{Special case}
The breadth-one case where the corank of the Jacobian matrix equals one occurs frequently, and can be treated more efficiently.

In fact, we have shown in \cite[Theorem 3.8]{LZ:2011} that each step
of deflation described by (\ref{modifieddfl})  only reduces the
multiplicity $\mu$ of the singular solution  $\hat \xx$   by $1$.
According to Theorem  \ref{maintheorem}, the number of deflations
described by (\ref{aug}) will be $\mu-1$. Hence, Algorithm
\verb"VISS" generates an augmented regular system of the size
$(2^{\mu-1}n)\times(2^{\mu-1}n)$.  However, in \cite{LZ:2012}, we
introduced a more  efficient method based on the parameterized
multiplicity structure, to obtain a deflated regular system
$G(\xx,\bb,\boldsymbol{\lambda})$ which is of the size $(\mu
n)\times(\mu n)$ and can be used to  verify not only  the existence
of an isolated singular solution,   but also its   multiplicity
structure.

Let us  introduce briefly the method in \cite{LZ:2012} for the
special case of breadth one.  By adding $\mu-1$ smoothing parameter
$b_0,b_1,\ldots,b_{\mu-2}$ to a well selected polynomial, assumed to
be $f_1$, 
we derive
a square augmented system
\[G(\xx,\bb,\boldsymbol{\lambda})=\left(\begin{array}{c}
                                        \widetilde{F}(\xx,\bb) \\
                                        L_1(\widetilde{F})\\
                                        \vdots\\
                                        L_{\mu-1}(\widetilde{F})
                                      \end{array}
\right)=\mathbf{0},
\mbox{ where }
\widetilde{F}(\xx,\bb)=\left(\begin{array}{c}
                               f_{1}(\xx)-\sum_{\nu=0}^{\mu-2}\frac{b_{\nu}x_1^{\nu}}{\nu!} \\
                               f_{2}(\xx) \\
                               \vdots \\
                               f_{n}(\xx)
                             \end{array}
\right),\] and $L_1,\ldots,L_{\mu-1}$ are parameterized bases
of the local dual space in variables $\boldsymbol{\lambda}$.
Furthermore, we proved that if Theorem \ref{verification} is
applicable to $G$ and yields inclusions for
$\hat{\xx}\in\mathbb{R}^n$, $\hat{\bb}\in\mathbb{R}^{\mu-1}$ and
$\hat{\boldsymbol{\lambda}}\in\mathbb{R}^{(\mu-1)\times(n-1)}$ such
that $G(\hat{\xx},\hat{\bb},\hat{\boldsymbol{\lambda}})=\mathbf{0}$,
then $\hat{\xx}$ is a breadth-one singular solution of
$\widetilde{F}(\xx,\hat{\bb})=\mathbf{0}$ with multiplicity $\mu$
and $\{1,L_1,\ldots,L_{\mu-1}\}$ with
$\boldsymbol{\lambda}=\hat{\boldsymbol{\lambda}}$ is a basis of
$\mathcal{D}_{\hat{\xx}}$.
%
\begin{example}\cite[Example 4.11]{RuGr09}
Consider a polynomial system
\[F=\{x_1^2x_2-x_1x_2^2,x_1-x_2^2\}.\]
The system $F$ has $(0,0)$ as a $4$-fold isolated zero.
\end{example}
We choose $x_2$ as the perturbed variable and  add the
 univariate polynomial $-b_1-b_2x_2-\frac{b_3}{2}x_2^2$
 to the first equation in $F$ to obtain an augmented
system
\[\left\{\begin{array}{r}
            x_1^2x_2-x_1x_2^2-b_1-b_2x_2-\frac{b_3}{2}x_2^2=0, \\
            x_1-x_2^2=0, \\
            2\lambda_1x_1x_2-\lambda_1x_2^2+x_1^2-2x_1x_2-b_2-b_3x_2=0, \\
            \lambda_1-2x_2=0, \\
            \lambda_1^2x_2+2\lambda_1x_1-2\lambda_1x_2+2\lambda_2x_1x_2-\lambda_2x_2^2-x_1-\frac{b_3}{2}=0, \\
            \lambda_2-1=0, \\
            \lambda_1^2+2\lambda_1\lambda_2x_2-\lambda_1+2\lambda_2x_1-2\lambda_2x_2+2\lambda_3x_1x_2-\lambda_3x_2^2=0, \\
            \lambda_3=0,
          \end{array}
\right.\]
which is of the size $8\times8$ while Algorithm \verb"VISS" generates a system of the size $16\times16$.
Applying \verb"verifynlss" with an initial approximation
\[(0.002,0.003,-0.001,0.0015,-0.002,0.002,1.001,-0.01),\]
we obtain inclusions
\[-1.0e-14\leq \hat{x}_i\leq1.0e-14,\mbox{ for }i=1,2,3,\]
\[-1.0e-14\leq \hat{b}_i\leq1.0e-14,\mbox{ for }i=1,2,3.\]
This proves that the perturbed system $\widetilde{F}(\xx,\hat{\bb})$
($|\hat{b}_i| \leq 10^{-14}, i=1,2,3$) has a $4$-fold breadth-one
root $\hat{\xx}$ within $|\hat{x}_i| \leq 10^{-14}, i=1,2,3$.

\section{Experiments}\label{exp}

We can generate an augmented square and regular system  and initial
values for $\tilde \yy$ in Maple or Matlab, then
 apply INTLAB
function \textsf{verifynlss} in Matlab \citep{RumpINT} to obtain the
verified error bounds.  The following experiments are done in
Maple~15 for $\text{Digits}:=14$ and Matlab~R2011a with INTLAB\_V6
under Windows~7. Let  $n$  be the number of polynomials and
variables, $\mu$ be the multiplicity. The fourth and fifth  column
show the decrease of the corank and the increase of the smallest
singular values  of the Jacobian matrix respectively. The last two
columns give qualities of the verified error bounds.

The first three examples DZ1, DZ2, DZ3 are cited from \cite{DZ:2005}. It should
be noticed that the coefficients of  polynomials in the example DZ3
 have  algebraic
numbers $\sqrt{5}, \sqrt{7}$. These irrational coefficients are
rounded  to fourteen digits in Maple or Matlab.  The other
examples are quoted from the PHCpack
demos by Jan Verschelde. 
Codes of  Algorithm VISS and examples are available at {\small
\url{http://www.mmrc.iss.ac.cn/~lzhi/Research/hybrid/VISS}}.

\begin{table*}[ht]
\caption{\label{table1} Algorithm Performance}
\begin{center}
\begin{tabular}{|l|c|c|l|l|l|l|} \hline
System & $n$ & $\mu$ & $\corank(G_{\yy}(\tilde \yy))$ & Smallest
$\sigma$ & $\|\mathbf{X}\|$ & $\|\mathbf{B}\|$\\ \hline DZ1 & 4 &
131 & 4 $\rightarrow$ 4
$\rightarrow$ 0 & 1.1e-07 $\rightarrow$ 6.2e-01 & e-321 & e-321\\
\hline DZ2 & 3 & 16 & 2 $\rightarrow$ 2 $\rightarrow$ 1
$\rightarrow$ 0 & 7.1e-11 $\rightarrow$ 5.3e-03 & e-14 & e-14\\
\hline
DZ3 & 2 & 4 & 1 $\rightarrow$ 1 $\rightarrow$ 1 $\rightarrow$
0 &
2.2e-04 $\rightarrow$ 9.6e-03 & e-7 & e-7\\
 \hline cbms1 & 3
& 11 & 3 $\rightarrow$ 0 & 5.5e-04 $\rightarrow$ 1.0e-00 & e-321 &
e-321\\ \hline cbms2 & 3 & 8 & 3
$\rightarrow$ 0 & 3.2e-04 $\rightarrow$ 1.0e-00 & e-321 & e-321\\
\hline mth191 & 3 & 4 & 2 $\rightarrow$ 0 & 2.5e-04 $\rightarrow$
3.7e-01 & e-14 & e-14\\ \hline KSS & 10 & 638 & 9 $\rightarrow$ 0 &
6.5e-05 $\rightarrow$ 3.0e-01 & e-14 & e-14\\ \hline Caprasse & 4 &
4 & 2 $\rightarrow$ 0 & 1.4e-03 $\rightarrow$ 9.9e-01 & e-14 &
e-14\\ \hline
cyclic9 & 9 &
4 & 2 $\rightarrow$ 0 & 2.1e-10 $\rightarrow$ 3.8e-01 & e-13 &
e-13\\ \hline
RuGr09 & 2 &
4 & 1 $\rightarrow$ 1 $\rightarrow$ 1 $\rightarrow$ 0 & 3.0e-07 $\rightarrow$ 1.0e-00 & e-14 &
e-14\\ \hline
LiZhi12 & 100 &
3 & 1 $\rightarrow$ 1 $\rightarrow$ 0 & 3.6e-12 $\rightarrow$ 2.2e-05 & e-14 &
e-14\\ \hline
Ojika1 & 2 &
3 & 1 $\rightarrow$ 1 $\rightarrow$ 0 & 3.7e-04 $\rightarrow$ 5.6e-02 & e-14 &
e-14\\ \hline
Ojika2 & 3 &
2 & 1 $\rightarrow$ 0 & 9.9e-04 $\rightarrow$ 4.6e-01 & e-14 &
e-14\\ \hline
Ojika3 & 3 &
2 & 1 $\rightarrow$ 0 & 9.6e-05 $\rightarrow$ 5.0e-02 & e-14 &
e-14\\ \hline
Ojika4 & 3 &
3 & 1 $\rightarrow$ 1 $\rightarrow$ 0 & 1.2e-04 $\rightarrow$ 2.0e-00 & e-14 &
e-14\\ \hline
Decker2 & 3 &
4 & 1 $\rightarrow$ 1 $\rightarrow$ 1 $\rightarrow$ 0 & 2.2e-09 $\rightarrow$ 1.0e-00 & e-14 &
e-14\\ \hline

\end{tabular}
%
\end{center}
\end{table*}

\paragraph{Acknowledgements}

The authors are grateful to Yijun Zhu for helping us implement Algorithm
VISS in  Matlab. The first author is grateful to Anton Leykin for
fruitful discussions during the IMA summer program at Georgia Tech,
2012.

This research is supported  by NKBRPC 2011CB302400 and the Chinese
National Natural Science Foundation under Grants: 91118001,
60821002/F02, 60911130369 and 10871194.

\bibliographystyle{siam}
\bibliography{strings,wuxiaoli,linan,zhi}
\end{document}